\documentclass[12pt,a4paper]{article}
\usepackage{latexsym,amssymb,amsmath,amsthm}
\usepackage{enumitem}
\usepackage{xcolor}
\usepackage{graphicx}
\usepackage{subfig}

\newtheorem{theorem}{Theorem}[section]
\newtheorem*{theorem*}{Theorem}
\newtheorem{lemma}[theorem]{Lemma}
\newtheorem{corollary}[theorem]{Corollary}

\newtheorem{problem}[theorem]{Problem}

\newcommand{\claimproofend}{\hspace*{.1mm}\hspace{\fill}}

\theoremstyle{remark}
\newtheorem{remark}[theorem]{Remark}

\newcommand{\size}[1]{\left|#1\right|}

\newcommand\Setx[1] {\left\{{#1}\right\}}

\newcommand{\CC}{\mathcal C}
\newcommand{\RP}[1]{\mathbb{R}\mathrm{P}^{#1}}

\newcommand{\ZZ}{\mathbb Z}

\sloppy
\graphicspath{{./figures/}}

\begin{document}
\title{\textbf{Colouring normal quadrangulations of projective
    spaces}}

\author{
Tom\'a\v s Kaiser\footnote{Department of Mathematics \& NTIS, 
University of West Bohemia in Pilsen, 
Czech Republic}
  \and On-Hei Solomon Lo\footnote{School of Mathematical Sciences,
Tongji University,
Shanghai  200092, China}
  \and Atsuhiro Nakamoto\footnote{Faculty of Environment and Information Sciences,
Yokohama National University,
Yokohama 240-8501, Japan}
  \and Yuta Nozaki$^{\ddag}$\footnote{WPI-SKCM$^2$, Hiroshima University, Higashi-Hiroshima 739-8526, Japan}
  \and Kenta Ozeki$^{\ddag}$
}

\date{}

\maketitle
\begin{abstract}
  Youngs proved that every non-bipartite quadrangulation of the
  projective plane $\RP 2$ is 4-chromatic. Kaiser and Stehl\'{\i}k
  [J.\ Combin.\ Theory Ser.\ B 113 (2015), 1--17] generalised the notion
  of a quadrangulation to higher dimensions and extended Youngs'
  theorem by proving that every non-bipartite quadrangulation of the
  $d$-dimensional projective space $\RP d$ with $d \geq 2$ has
  chromatic number at least $d+2$.  On the other hand, Hachimori et
  al.\ [European.\ J.\ Combin.\ 125 (2025), 104089] defined another kind
  of high-dimensional quadrangulation, called a {\em normal
    quadrangulation}.  They proved that if a non-bipartite normal
  quadrangulation $G$ of $\RP d$ with any $d \geq 2$ satisfies a
  certain geometric condition, then $G$ is $4$-chromatic, and asked
  whether the geometric condition can be removed from the result. In
  this paper, we give a negative solution to their problem for the
  case $d=3$, proving that there exist 3-dimensional normal
  quadrangulations of $\RP 3$ whose chromatic number is arbitrarily
  large. Moreover, we prove that no normal quadrangulation of $\RP d$
  with any $d \geq 2$ has chromatic number $3$.
\end{abstract}

\section{Introduction}
\label{sec:introduction}

Let $G$ be a graph,
and let $c\colon V(G) \to \{1,\ldots,k\}$ be a map.
We say that $c$ is a {\em $k$-colouring\/} if every edge $xy \in E(G)$ satisfies $c(x) \ne c(y)$.
A graph $G$ is said to be {\em $k$-colourable\/} if $G$ admits a $k$-colouring.
The {\em chromatic number\/} of $G$, denoted by $\chi(G)$,
is the smallest $k$ such that $G$ is $k$-colourable,
and if $\chi(G)=k$, then $G$ is said to be {\em $k$-chromatic}.
A {\em $k$-cycle\/} $C$ means a cycle of length $k$,
and we say that $C$ is {\em even\/} (respectively, {\em odd\/})
if $k$ is even (respectively, odd).

The {\em Four Colour Theorem\/} \cite{4ct1,4ct2}, one of the most
celebrated results in graph theory, states that every planar graph is
$4$-colourable.  This result has been extended in many directions, yet
in this paper we consider colourings of graphs embedded on
non-spherical closed surfaces.
We consider only {\em 2-cell embeddings} of graphs on surfaces, meaning embeddings whose faces are homeomorphic to an open 2-cell.

Heawood~\cite{Heawood} proved that every graph $G$ embedded in a
closed non-spherical surface $S$ satisfies
$$\chi(G) \leq \frac {7+\sqrt{24k+1}}2,$$
where $k$ denotes the {\em Euler genus\/} of $S$, that is,
$k=2-\epsilon(S)$ with $\epsilon(S)$ denoting the {\em Euler
  characteristic\/} of $S$.  (This bound is known to be best possible
except for the Klein bottle \cite{Ringel}.)  Hence, if we let $G$ be a
graph embeddable in a closed surface $S$ with Euler genus $k>0$, then
we have $\chi(G)=O(\sqrt{k})$.  On the other hand,
Thomassen~\cite{Thoma} proved that
for any closed surface $S$ with Euler genus $k>0$, there exists a
positive integer $N(S)$ such that if $G$ is a graph embedded in $S$
with edge-width at least $N(S)$, then $G$ is $5$-colourable. (Recall
that {\em edge-width\/} of a graph $G$ on a surface $S$ is defined to
be the length of a shortest non-contractible cycle of $G$.) Note that
the value $5$ is best possible, since every non-spherical surface $S$
admits non-$4$-colourable graphs of arbitrary edge-width. In a more informal way, Thomassen’s result can be rephrased as stating that \emph{locally planar} graphs on any non-spherical surface (that is, embedded graphs without short non-contractible cycles) are 5-colourable.

Now we turn our attention to \emph{even-sided} graphs on a closed
surface $S$, that is, simple graphs embedded in $S$ such that each face is bounded
by an even cycle. In particular, a \emph{quadrangulation} of $S$ is an
even-sided graph on $S$ with each face being quadrangular.

It is easy to see that every even-sided graph on the sphere is bipartite, but
this does not hold for any non-spherical closed surface.  Hutchinson
\cite{hutch2} proved that if $G$ is an even-sided graph on $S$ with
Euler genus $k>0$, then
\[
\chi(G) \leq \frac {5+\sqrt{16k-7}}2.
\]
(Based on recent developments \cite{Liu} in this topic, it is known
that this bound is best possible, except for the Klein bottle and the
double torus.)  Regarding the locally planar version, Hutchinson
\cite{hutch} proved that every locally planar even-sided graph on any
orientable surface is 3-colorable, with the constant $3$ being best
possible. However, this result does not hold even for the projective
plane $\RP 2$, as demonstrated by Youngs in the following theorem
\cite{young}.

\begin{theorem} [Youngs, 1996] \label{t-young}
Every non-bipartite quadrangulation of the projective plane $\RP 2$
is $4$-chromatic.
\end{theorem}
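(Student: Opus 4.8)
The plan is to prove the two inequalities $\chi(G)\ge 4$ and $\chi(G)\le 4$ for a non-bipartite quadrangulation $G$ of $\RP 2$; since $G$ is non-bipartite, only the first is substantive. For the upper bound I would simply invoke Hutchinson's bound for even-sided graphs quoted above: a quadrangulation is even-sided, and for Euler genus $k=1$ that bound reads $\chi(G)\le\bigl(5+\sqrt{16\cdot 1-7}\bigr)/2=4$. (It is worth recording that Euler's formula for a quadrangulation of $\RP 2$ gives $|E(G)|=2|V(G)|-2$, so $G$ has a vertex of degree at most $3$; this is what underlies the alternative, more elementary proofs of the upper bound, but citing Hutchinson is cleanest.)

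For the lower bound it suffices to show that $G$ admits no proper $3$-colouring: combined with non-bipartiteness this forces $\chi(G)\ge 4$. So suppose for contradiction that $c\colon V(G)\to\ZZ/3$ is a proper colouring. For each edge $uv$ of $G$ and each of its two orientations we have $c(v)-c(u)\in\{1,-1\}\subseteq\ZZ/3$, so we may define a $\ZZ$-valued $1$-cochain $f$ on the CW-structure that the embedding $G\hookrightarrow\RP 2$ puts on $\RP 2$, by letting $f(u\to v)\in\{+1,-1\}\subseteq\ZZ$ be the canonical representative of $c(v)-c(u)$, with $f(v\to u)=-f(u\to v)$. The decisive local observation is that $f$ is a cocycle: along the boundary $v_1v_2v_3v_4$ of any (quadrilateral) face, the four increments $f(v_1\to v_2),\ldots,f(v_4\to v_1)$ lie in $\{+1,-1\}$ and sum to $0$ in $\ZZ/3$, and four values $\pm1$ can sum to $0$ modulo $3$ only when exactly two equal $+1$ and two equal $-1$; hence they already sum to $0$ in $\ZZ$. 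This is exactly where quadrangularity is used.

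Since $H^1(\RP 2;\ZZ)=0$, the cocycle $f$ is a coboundary: there is $g\colon V(G)\to\ZZ$ with $f(u\to v)=g(v)-g(u)$ for every oriented edge. (Homology-free version: $f$ sums to $0$ around every contractible closed walk, because such a walk bounds a disc tiled by faces over each of which $f$ sums to $0$; and it sums to $0$ around the non-contractible cycle $\gamma$ as well, since $2\sum_{\gamma}f=\sum_{\gamma^2}f=0$ because $\gamma^2$ is contractible and $\ZZ$ is torsion-free; hence $f$ sums to $0$ around all closed walks, so fixing $g$ at one vertex defines it consistently.) Now for every edge $uv$ we have $g(v)-g(u)=\pm1$, so $g\bmod 2$ is a proper $2$-colouring of $G$, contradicting the assumption that $G$ is non-bipartite. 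Together with the upper bound this yields $\chi(G)=4$.

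The only genuine obstacle lies in the lower bound, and inside it the one conceptual step is the passage from a $\ZZ/3$-colouring to an \emph{integral} cocycle: one must spot that each colour increment lifts canonically to $\pm1\in\ZZ$ and that it is precisely the quadrilateral faces which force these lifts to cancel over $\ZZ$ rather than merely modulo $3$. Once that is in place, the vanishing of $H^1(\RP 2;\ZZ)$ finishes the argument mechanically, and (as one can already anticipate) this is the template that will extend to higher-dimensional analogues.
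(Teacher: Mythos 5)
Your proof is correct, but it takes a genuinely different route from the paper's, so a comparison is in order. The paper argues from a proper $4$-colouring: for each $t\in\{2,3,4\}$ it builds a system of closed curves $H_t$ dual to the edge-cut $E_t$ (edges coloured $\{1,t\}$ or by the complementary pair), shows that an odd cycle crosses each $H_t$ an odd number of times so that each $H_t$ has a non-contractible component, and then uses the fact that two non-contractible closed curves in $\RP 2$ must cross; the face where $H_2$ and $H_3$ cross carries all four colours. You instead assume a proper $\ZZ/3$-colouring, lift the increments canonically to $\pm1\in\ZZ$, note that the quadrilateral faces force these lifts to form an integral $1$-cocycle (four values $\pm1$ summing to $0$ modulo $3$ must sum to $0$ in $\ZZ$), integrate it using $H^1(\RP2;\ZZ)=0$, and read off a proper $2$-colouring from the parity of the resulting potential, contradicting non-bipartiteness. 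Both arguments are sound. Your tension-lifting proof is shorter and has the notable feature that it applies verbatim to normal quadrangulations of any connected space whose first integral homology is torsion, since only the quadrilateral $2$-cells and $\mathrm{Hom}(H_1(X;\ZZ),\ZZ)=0$ are used; in particular it already yields Corollary~\ref{cor:3col_of_RPd} for $\RP{d}$ and the even lens spaces without any intersection theory. What it does not give is the stronger structural conclusion of the paper's proof (every proper $4$-colouring has a face showing all four colours) nor the cup-product framework of Theorem~\ref{thm:3col_of_X}, whose hypotheses concern the $\ZZ_2$-cohomology ring rather than the torsion of $H_1$. Two minor points: you are right that the upper bound $\chi\le 4$ requires a separate argument (the paper's alternative proof establishes only non-$3$-colourability and leaves the upper bound to the literature), and Hutchinson's bound does the job; but your parenthetical remark that a single vertex of degree at most $3$ underlies the elementary upper-bound proofs glosses over the need for the degeneracy to hold hereditarily --- harmless here, since you invoke Hutchinson for the actual bound, but it should not be left as stated.
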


This result implies that the projective plane admits non-3-colourable
even-sided graphs with arbitrarily large edge-width, highlighting a
significant difference between phenomena on orientable and
nonorientable surfaces. Therefore, Theorem~\ref{t-young} prompts
further in-depth studies on the colouring of even-sided graphs on
surfaces, as discussed in \cite{ahnno, ms, nno}.

In this paper, we consider higher-dimensional extensions of the notion
of quadrangulation, particularly in relation to their chromatic number.
One such extension was introduced by the first author and Stehl\'{\i}k~\cite{kais}, as follows.

Let ${\cal K}$ be a $d$-dimensional {\em generalized\/} simplicial
complex, where $d \geq 2$ (in this generalized version, two simplices
may share the same vertex set).  A {\em KS-quadrangulation\/} $G$ of
${\cal K}$ is a spanning subgraph of the 1-skeleton ${\cal K}^{(1)}$
of ${\cal K}$ such that for every $d$-dimensional simplex $t$ of
${\cal K}$, the intersection of $G$ and $t$ is a complete bipartite
graph of $d+1$ vertices with at least one edge.  (See Figure
\ref{3dquad}.) When the underlying space of ${\cal K}$ is homeomorphic
to a topological space $X$, we say $G$ is a {\em
  KS-quadrangulation\/} of $X$.  It is not hard to see that a
2-dimensional KS-quadrangulation coincides with an ordinary
quadrangulation of $X$.

\begin{figure}[htb]
\begin{center}
\includegraphics[width=130mm]{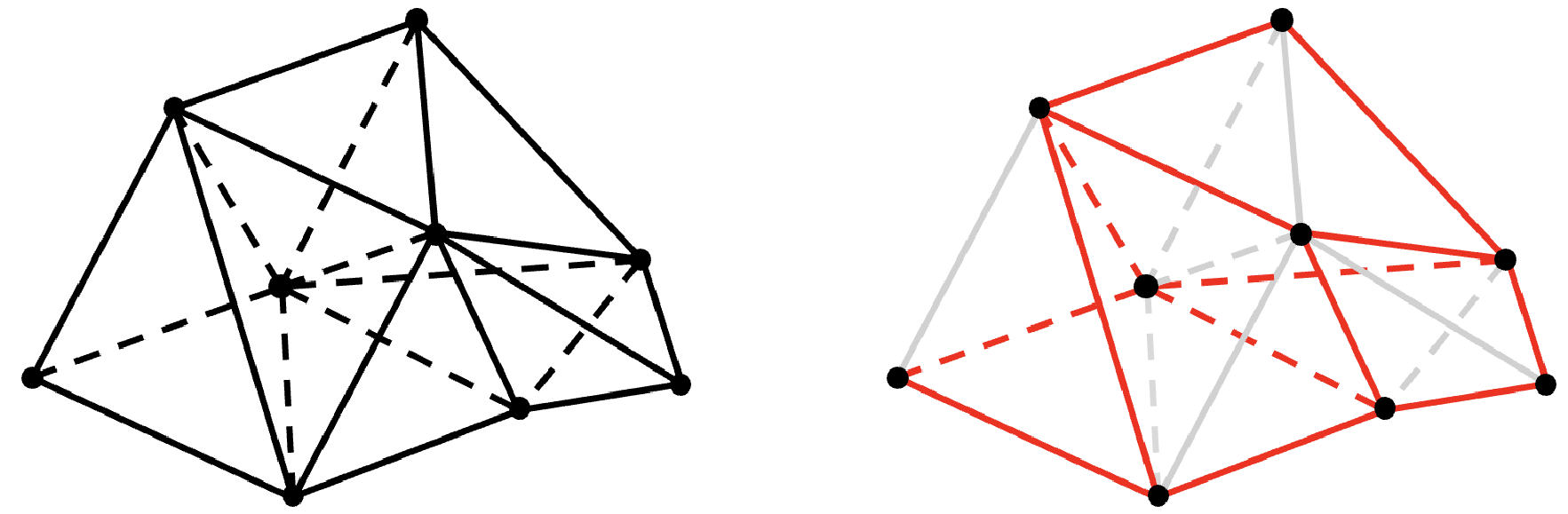}
\end{center}
\caption{The local structure of a $3$-dimensional
  KS-quadrangulation. Left: A portion of the underlying simplicial
  complex. Right: A part of the KS-quadrangulation (in
  red). 
}
\label{3dquad}
\end{figure}

The following theorem, proved in~\cite{kais} for KS-quadrangulations
of the $d$-dimensional projective space $\RP d$, extends
Theorem~\ref{t-young}.

\begin{theorem} [Kaiser and Stehl\'{\i}k, 2015] \label{ks}
 Let $d \geq 2$, and let $G$ be a non-bipartite KS-quadrangulation of
 the $d$-dimensional projective space $\RP d$.
 Then the chromatic number of $G$ is at least $d+2$.
\end{theorem}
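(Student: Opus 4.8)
The plan is to derive the bound from the topological (Lov\'asz-type) lower bound on the chromatic number: if the box complex $B(G)$ (in the variant whose simplices are the ordered pairs of disjoint, completely joined vertex subsets of $G$, with the free coordinate-swap involution) admits a $\ZZ_2$-map from the sphere $S^d$ with the antipodal action, then $\chi(G)\ge d+2$. So the whole problem reduces to extracting such an equivariant map from the combinatorial data of a non-bipartite KS-quadrangulation of $\RP d$, and I would try to do this via the natural double cover.

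First I would pass to the double cover $\pi\colon S^d\to\RP d$. Pulling back the generalized simplicial complex $\mathcal K$ carrying $G$ gives a generalized complex $\widetilde{\mathcal K}$ triangulating $S^d$, together with the free deck involution $\nu$, and the $1$-skeleton pulls back to a KS-quadrangulation $\widetilde G$ of $S^d$ on which $\nu$ acts as a graph automorphism. The crucial intermediate claim is that $\widetilde G$ is bipartite. Since $d\ge 2$, $S^d$ is simply connected, so any cycle of $\widetilde G$ bounds a disc assembled from $2$-simplices of $\widetilde{\mathcal K}$; on each such $2$-simplex $\widetilde G$ restricts to the complete bipartite graph $K_{1,2}$ (a path of length $2$), and walking around the boundary of that disc, a parity count forces the cycle to be even. (This is the higher-dimensional counterpart of ``every quadrangulation of $S^2$ is bipartite'' and should already be available in the setting of~\cite{kais}.)

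Next I would use non-bipartiteness of $G$. We may assume $G$ is connected; then, since $\widetilde G$ is bipartite while $G$ is not, the double cover $\widetilde G\to G$ cannot be trivial, so $\widetilde G$ is connected and hence has a unique $2$-colouring, with colour classes $X$ and $Y$. The automorphism $\nu$ preserves the pair $\{X,Y\}$; if it fixed $X$ and $Y$ setwise, the partition would descend to a proper $2$-colouring of $G=\widetilde G/\nu$, which is impossible. Hence $\nu$ interchanges $X$ and $Y$. Now for every simplex $\sigma$ of $\widetilde{\mathcal K}$ the intersection $\widetilde G\cap\sigma$ is complete bipartite with sides $\sigma\cap X$ and $\sigma\cap Y$ (one possibly empty), and $\pi$ maps it isomorphically onto $G\cap\pi(\sigma)$; thus $(\pi(\sigma\cap X),\pi(\sigma\cap Y))$ is an ordered pair of disjoint, completely joined vertex sets of $G$, i.e.\ a simplex of $B(G)$. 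Sending the barycentre of $\sigma$ to this simplex defines a simplicial map $\mathrm{sd}(\widetilde{\mathcal K})=S^d\to B(G)$ (the face relations of $\mathrm{sd}(\widetilde{\mathcal K})$ are respected because along a chain $\sigma_0\subsetneq\cdots\subsetneq\sigma_k$ the images are contained in the single simplex $(\pi(\sigma_k\cap X),\pi(\sigma_k\cap Y))$, using that $\pi$ is injective on each closed simplex), and it is $\ZZ_2$-equivariant precisely because $\nu$ swaps $X$ and $Y$ and hence swaps the two coordinates of each pair. Feeding this map into the topological lower bound yields $\chi(G)\ge d+2$.

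I expect the technical heart — and the main obstacle — to be the construction of the equivariant map into the box complex: pinning down the correct variant of $B(G)$ (allowing empty sides) so that the Lov\'asz bound applies verbatim, checking injectivity of $\pi$ on closed simplices and the resulting simpliciality, and disposing of connectedness and lower-dimensional/degenerate subtleties — together with making the bipartiteness of KS-quadrangulations of $S^d$ fully rigorous in the generalized-complex setting.
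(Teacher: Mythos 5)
This theorem is quoted from Kaiser and Stehl\'{\i}k~\cite{kais} and is not proved in the present paper, so your proposal can only be compared with the original argument; your plan is in fact essentially that argument (pass to the double cover $S^d\to\RP{d}$, show the lifted KS-quadrangulation is bipartite, observe that the deck involution swaps the two colour classes, and convert this into a $\ZZ_2$-map from $S^d$ into a box-type complex to which a Lov\'asz-type bound applies). Two points deserve attention. First, a minor one: the restriction of $G$ to a $2$-face of a $d$-simplex need not be $K_{1,2}$; it can be edgeless (the induced bipartition of a $2$-face may put all three vertices on one side). This does not hurt you, since all you need is that every triangle of the triangulation carries $0$ or $2$ edges of $G$, so the indicator function of $E(G)$ is a $\ZZ_2$-cocycle and every null-homologous cycle of $\widetilde G$ (hence every cycle, as $S^d$ is simply connected for $d\ge 2$) is even.

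The second point is the one you yourself flag, and it is where the factor of one is won or lost: the choice of box-complex variant. If you allow arbitrary pairs with an empty side (the complex usually denoted $B_0(G)$), a $\ZZ_2$-map from $S^d$ only yields $\chi(G)\ge \mathrm{ind}_{\ZZ_2}(B_0(G))+1\ge d+1$, which is not enough. To get $d+2$ you need the map to land in the smaller complex $B(G)$ in which a pair $(A,B)$ is admitted only when both $A$ and $B$ have a common neighbour (equivalently, you can work with $\mathrm{Hom}(K_2,G)$). Fortunately your construction does land there: every simplex $\sigma$ of $\widetilde{\mathcal K}$ lies in some $d$-simplex $\tau$, and in $\tau$ both colour classes $\tau\cap X$ and $\tau\cap Y$ are nonempty and completely joined, so $\pi(\tau\cap Y)$ supplies a common neighbour for $\pi(\sigma\cap X)$ and vice versa. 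Once you insert this observation, the bound $\chi(G)\ge \mathrm{ind}_{\ZZ_2}(B(G))+2\ge d+2$ applies and the proof is complete; without it, the argument as written proves only $\chi(G)\ge d+1$.
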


It is also shown in~\cite{kais} that for any $d \geq 3$ and
$t \geq 5$, the complete graph $K_t$ on $t$ vertices can be embedded
in $\RP d$ as a KS-quadrangulation provided that $t-d$ is even. Thus,
there is no constant upper bound on the chromatic number of
KS-quadrangulations of $\RP d$ for $d \geq 3$.

On the other hand, Hachimori et al.\ \cite{hachizono} defined a {\em
  normal quadrangulation} of a $d$-dimensional manifold $X$ as
the $1$-skeleton of a {\em cubical complex\/} ${\cal C}$ whose
underlying space is homeomorphic to $X$, that is, one each of
whose face is either a point, a segment, a quadrilateral, or more
generally a cube of any dimension.  As in the case of
KS-quadrangulations, for $d=2$, a normal quadrangulation $G$ of
$X$ coincides with an ordinary quadrangulation of $X$,
since $G$ is a graph embedded in $X$ such that every
$2$-dimensional face is a quadrilateral.

Hachimori et al.~\cite{hachizono} studied a particular type of
normal quadrangulations of the $d$-dimensional projective space
$\RP d$, known as \emph{zonotopal quadrangulations} (we refer to their
paper for the necessary definitions). In dimension $2$, zonotopal
quadrangulations of $\RP 2$ can be obtained from rhombic tilings of
regular polygons (cf.~\cite{hama}).

The main result of~\cite{hachizono} is the following.
\begin{theorem}
[Hachimori, Nakamoto, and Ozeki, 2025]
\label{Hachimori et al., 2024+}
Let $d \geq 2$ be any integer.
Then every non-bipartite zonotopal quadrangulation
of the $d$-dimensional projective space $\RP d$ is $4$-chromatic.
\end{theorem}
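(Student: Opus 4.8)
The plan is to prove the two inequalities $\chi(G)\ge 4$ and $\chi(G)\le 4$ separately, where $G$ is a non-bipartite zonotopal quadrangulation of $\RP d$ with $d\ge 2$. Throughout I use that $G$ is the $1$-skeleton of a cubical complex $\mathcal C$ with underlying space $\RP d$ (so every $2$-cell of $\mathcal C$ is a quadrilateral), that $G$ is connected, and that non-bipartiteness already gives $\chi(G)\ge 3$. The lower bound will come from a winding-number argument in the spirit of Youngs' proof of Theorem~\ref{t-young}, and the upper bound will exploit the zonotopal structure after passing to the double cover $S^d\to\RP d$.

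\emph{Lower bound.} I would show that $\chi(G)=3$ is impossible. Suppose $c\colon V(G)\to\ZZ/3$ is a proper colouring. For an edge-walk $W=x_0x_1\cdots x_k$ in $G$ let $\delta_t\in\{+1,-1\}\subseteq\ZZ$ be the lift of $c(x_{t+1})-c(x_t)\in\{1,2\}=\ZZ/3\setminus\{0\}$, and put $w(W)=\sum_t\delta_t\in\ZZ$. Then $w$ is additive under concatenation, changes sign under reversal, vanishes on backtracks, and $w(W)\equiv 0\pmod 3$ for every closed walk $W$ by telescoping. On the boundary $\partial F$ of a $2$-cell $F$ of $\mathcal C$, which is a closed walk of length $4$, the value $w(\partial F)$ lies in $\{-4,-2,0,2,4\}$ and is $\equiv 0\pmod 3$, hence $w(\partial F)=0$. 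Since $w$ is additive and kills backtracks and all $2$-cell boundaries, it factors through $H_1(\RP d;\ZZ)\cong\ZZ/2$ (using $d\ge 2$); as every homomorphism $\ZZ/2\to\ZZ$ is trivial, $w$ vanishes on every closed walk of $G$. Fixing a base vertex $x_\ast$ and putting $h(x)=w(W)$ for any walk $W$ from $x_\ast$ to $x$ is therefore well defined, and $|h(x)-h(y)|=1$ for every edge $xy$; hence $x\mapsto h(x)\bmod 2$ is a proper $2$-colouring of $G$, contradicting non-bipartiteness. So $\chi(G)\ge 4$. (This step in fact works for every normal quadrangulation of $\RP d$ with $d\ge 2$ and uses nothing about zonotopes.)

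\emph{Upper bound.} Here I pull $\mathcal C$ back along the double cover $S^d\to\RP d$ to a cubical complex $\widetilde{\mathcal C}$ on $S^d$ with $1$-skeleton $\widetilde G$ and free deck involution $\sigma$, so that $G=\widetilde G/\sigma$. The zonotopal structure lifts, making $\widetilde{\mathcal C}$ (combinatorially) the boundary complex of a $(d+1)$-dimensional zonotope; consequently $\widetilde G$ is the tope graph of an oriented matroid of rank $d+1$, hence a partial cube, and in its canonical embedding $\widetilde G\subseteq Q_N$ (one coordinate per zone) the map $\sigma$ acts by complementing all coordinates. If $N$ were even, complementation would preserve the parity of the Hamming weight, so the weight-parity $2$-colouring of $\widetilde G$ would descend to $G$, contradicting non-bipartiteness; hence $N$ is odd, and since the quadrangulation is nontrivial, $N\ge 3$. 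Now fix the first coordinate and represent each vertex of $G$ by the unique lift $X$ with $X_1=1$. Unravelling the quotient and using that edges of $\widetilde G\subseteq Q_N$ join vectors at Hamming distance $1$, two such representatives $X,Y$ are adjacent in $G$ exactly when either $X$ and $Y$ differ in a single coordinate (necessarily one of $2,\dots,N$), or $Y$ is obtained from $X$ by flipping all the coordinates $2,\dots,N$; for $N\ge 3$ these two cases are disjoint. The first-type edges span an induced subgraph of $\widetilde G$, hence a bipartite graph whose two classes are the two weight parities; the second-type edges form a matching $M$, since each $X$ has at most one such neighbour. Because $N$ is odd, $X$ and its $M$-neighbour have the same weight parity, so every edge of $M$ lies inside one class of the weight-parity bipartition. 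I can therefore colour one class using $\{1,2\}$ so that $M$ restricted to it is properly coloured (possible since $M$ is a matching), colour the other class using $\{3,4\}$ likewise, and get a proper $4$-colouring of $G$: each first-type edge joins a $\{1,2\}$-coloured vertex to a $\{3,4\}$-coloured one, and the $M$-edges are properly coloured within their class. Hence $\chi(G)\le 4$, and with the lower bound, $G$ is $4$-chromatic.

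\emph{Main obstacle.} The winding argument and the final recolouring are essentially bookkeeping; the delicate point is the structural input to the upper bound, namely that the double cover $\widetilde G$ is a complement-closed partial cube and, in particular, that the ``long'' edges produced by the quotient form a matching. This is exactly where the zonotopal hypothesis is used, and it is indispensable: for a general normal quadrangulation the double cover need not be a partial cube, and general normal quadrangulations of $\RP 3$ may have arbitrarily large chromatic number, so the upper bound fails without it. A minor point to pin down is the equivalence of the descriptions of a zonotopal quadrangulation invoked above and the fact that the cube-dimension $N$ (equivalently, the number of zones) is what governs the bipartiteness of $G$.
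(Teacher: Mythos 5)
The paper offers no proof of this statement to compare against: it is quoted verbatim from \cite{hachizono}, and even the definition of a zonotopal quadrangulation is deferred to that reference, so your argument has to be judged on its own. Your lower bound is correct and complete: lifting the colour differences of a hypothetical $\ZZ/3$-colouring to a $\{\pm1\}$-valued tension, observing that its value on each quadrilateral boundary is an even integer in $[-4,4]$ divisible by $3$ and hence zero, and concluding from $H^1(\RP d;\ZZ)=0$ that the tension is a potential difference whose parity makes $G$ bipartite. This is the classical winding-number argument behind Theorem~\ref{t-young}, and it is worth noting that for $X=\RP d$ it gives the paper's Theorem~\ref{t:3col} by a far more elementary route than the intersection-theoretic proof of Section~\ref{sec:RPd}; what the paper's heavier machinery buys is the generality of Theorem~\ref{thm:3col_of_X} (e.g.\ lens spaces, whose cohomology conditions are phrased via cup products rather than via $H^1(\cdot;\ZZ)=0$) and the extra conclusion that some $2$-cube, indeed some $3$-cube, carries all four colours.

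The upper bound $\chi(G)\le 4$ is where the substance of the cited theorem lies, and there your argument has a genuine gap. The entire colouring scheme rests on the assertion that the double cover $\widetilde{\CC}$ is combinatorially the boundary complex of a $(d+1)$-dimensional zonotope, so that $\widetilde G$ sits in the hypercube $Q_N$ with edges exactly the Hamming-distance-one pairs and with the deck involution acting by coordinatewise complementation. You present this as an immediate consequence of ``the zonotopal structure lifting,'' but you neither state the definition you are lifting nor derive the claim from it. If, as in dimension $2$ (rhombic tilings of regular polygons, cf.\ \cite{hama}), a zonotopal quadrangulation is the antipodal-boundary quotient of a zonotopal tiling of a $d$-zonotope, then by the Bohne--Dress correspondence the doubled complex is the tope complex of a single-element lifting of the underlying oriented matroid, which need not be realizable; so ``boundary of a $(d+1)$-zonotope'' is in general too strong, although the weaker tope-graph statement (partial cube plus antipodal symmetry) that you actually use does hold and is exactly what would need to be proved. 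Granting that structure, the remainder --- $N$ must be odd, the ``long'' edges form a matching $M$ lying inside the weight-parity classes, and each class can be split into two colours to break $M$ --- is correct. In short: the lower bound is complete; the upper bound is a sound colouring scheme hanging on an unproved structural lemma that is really the heart of the theorem.
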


This led the authors to ask if this phenomenon extended to general
normal quadrangulations of projective spaces.

\begin{problem} \label{P:hachi}
Let $d \geq 2$ be any integer.
Is every non-bipartite normal quadrangulation of the $d$-dimensional projective space $\RP d$
$4$-chromatic?
\end{problem}

In this paper, we give a negative solution to Problem~\ref{P:hachi} by proving the following theorem for $d=3$.

\begin{theorem} \label{t:complete}
  For any $n\geq 1$, there is a normal quadrangulation of $\RP 3$
  whose $1$-skeleton contains the complete graph $K_n$.
\end{theorem}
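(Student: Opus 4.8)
The plan is to build the desired normal quadrangulation as the boundary complex of a suitable cubical complex whose underlying space is $\RP 3$, constructed so that its $1$-skeleton is highly connected. The natural model is to realize $\RP 3$ as the quotient of the $3$-sphere $S^3$ by the antipodal map, or equivalently as the quotient of the boundary of a centrally symmetric $4$-polytope. So first I would take a centrally symmetric simplicial or cubical $3$-sphere on which the antipodal involution acts freely, and take the quotient. To guarantee that $K_n$ appears in the $1$-skeleton, I would arrange that some $n$ vertices of the quotient are pairwise adjacent; this amounts to having $2n$ vertices in the $S^3$-level complex, forming $n$ antipodal pairs, such that every pair of vertices lying in distinct antipodal pairs is joined by an edge.

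The key steps, in order, are: (1) describe a cubical subdivision of $S^3$ that is centrally symmetric and admits a free antipodal $\ZZ_2$-action — a promising candidate is the boundary of a centrally symmetric cubical $4$-polytope, or a suitable refinement of the boundary of the $4$-cube $[-1,1]^4$, since its faces are cubes and the antipodal map is free on its boundary (the origin is the only fixed point and it is interior); (2) verify that the quotient by the antipodal map is a cubical complex whose underlying space is $\RP 3$ — this uses that the action is free and simplicial/cubical, so the quotient inherits a cubical structure and its underlying space is $S^3/\ZZ_2 \cong \RP 3$; (3) engineer the combinatorics so that the $1$-skeleton of the quotient contains $K_n$, which I would do by starting from the boundary of the cross-polytope $\diamond_4$ (whose vertices are $\pm e_1, \pm e_2, \pm e_3, \pm e_4$ and whose $1$-skeleton is the complete $4$-partite graph $K_{2,2,2,2}$, already containing $K_4$), and then \emph{stacking} or \emph{subdividing} cubically while keeping central symmetry, to push the clique number of the quotient up past $n$. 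Alternatively, and perhaps more cleanly, I would mimic the construction from~\cite{kais} that embeds $K_t$ in $\RP d$ as a KS-quadrangulation, and then \emph{cubulate} that construction: replace each simplex by a cube-subdivision compatible along faces, and check that the resulting complex is still cubical with underlying space $\RP 3$.

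The main obstacle will be step (3): making the clique number grow while simultaneously keeping the complex \emph{cubical} (every face a combinatorial cube) and keeping the underlying space a manifold homeomorphic to $\RP 3$. Cubical complexes are rigid — one cannot freely add edges without also adding the higher cubes that those edges must span, and those cubes must fit together to tile $\RP 3$ without creating singularities. The cleanest way around this is probably to work in $S^3$ with a centrally symmetric \emph{cubulation of a neighbourhood of a complete graph}: take $n$ disjoint arcs (antipodal pairs of vertices), connect each pair of vertices by an edge, thicken this $1$-complex to a handlebody, cubulate the handlebody and its complement compatibly and symmetrically, and take the quotient. Verifying that the two pieces glue to a cubical structure on all of $S^3$, and that the result descends to $\RP 3$, is the technical heart.

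Secondary points I would need to check: that the quotient graph is \emph{simple} (no loops or multiple edges from the identification — this is automatic once the $\ZZ_2$-action on vertices is free and no two antipodal vertices are joined by an edge in $S^3$, which I can ensure by construction) and that the quotient is \emph{non-bipartite} (it must be, since a bipartite normal quadrangulation would be a cubical complex and hence have a trivial first $\ZZ_2$-cohomology obstruction; more concretely, $K_n$ for $n \geq 3$ is already non-bipartite, so once $K_n \subseteq G$ with $n\ge 3$ we are done, which is why the theorem as stated suffices to answer Problem~\ref{P:hachi} negatively). I expect the write-up to hinge on an explicit, low-dimensional gadget for the handlebody cubulation that can be iterated $n$ times.
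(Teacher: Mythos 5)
There is a genuine gap, and it sits exactly where you locate ``the technical heart'': you never actually produce the $3$-dimensional cubical structure. Every variant of your plan (refining the boundary of $[-1,1]^4$, stacking the cross-polytope, cubulating a handlebody neighbourhood of a $K_n$-spine and its complement) ends with the instruction to ``cubulate compatibly and symmetrically'' and to ``verify that the pieces glue'', but no tool is offered for doing this, and it is not routine: prescribing a quadrangulation on the boundary of a $3$-dimensional piece and asking for a cubical complex filling its interior that induces exactly that boundary quadrangulation is a nontrivial hex-meshing problem. Moreover, for the handlebody version, the complement in $S^3$ of a regular neighbourhood of a $K_n$-spine is a genus-$\binom{n}{2}{+}$ handlebody-complement whose compatible cubulation you would still have to exhibit. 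Without a concrete mechanism here the argument does not close; the clique-building part (which you worry about most) is comparatively easy, since the edges of $K_n$ only need to be $1$-faces of the complex, not spans of higher cubes.

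The paper resolves precisely this issue by quoting Erickson's hex-meshing theorem (Theorem~\ref{t:jeff}): a bipartite quadrangulation of a closed surface in $\mathbb{R}^3$ (with an even number of faces) extends to a normal quadrangulation of its interior domain. This reduces the whole construction to a $2$-dimensional one. Concretely, the authors model $\RP 3$ as a ball $B_0$ with antipodal boundary identification (rather than as $S^3/\ZZ_2$), place $2n$ vertices on each of two antipodal circles $P,Q\subset\partial B_0$ plus two poles, quadrangulate $\partial B_0$ centrally symmetrically, and then insert $k$ nested annuli $A_1,\dots,A_k$ inside $B_0$, each spanning $P\cup Q$ and each carrying a bipartite quadrangulation whose edges join $v_j$ to $-v_{j+i}$. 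Consecutive annuli cobound solid tori, and the innermost region is a ball; Erickson's theorem fills each of these pieces with cubes, and the antipodal identification on $\partial B_0$ then yields $K_{n+1}$ in the quotient. If you want to salvage your plan, you should either import Erickson's theorem (after which your decomposition must be reorganised so that each piece is the interior domain of a quadrangulated closed surface) or supply an explicit iterated gadget for the fillings. As a minor point, your parenthetical claim that a bipartite normal quadrangulation has ``a trivial first $\ZZ_2$-cohomology obstruction'' is not a correct justification of non-bipartiteness; the correct (and sufficient) observation is the one you also make, namely that $K_n\subseteq G$ with $n\geq 3$ already forces $G$ to be non-bipartite.
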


We do not know whether Problem~\ref{P:hachi} holds for the case $d \geq 4$.
We leave this as a problem:

\begin{problem} \label{P:new}
Let $d \geq 4$ and $t \geq 4$ be any integers.
Is there a non-bipartite normal quadrangulation $G$ of the $d$-dimensional projective space $\RP d$
such that $G$ is not $t$-colourable?
\end{problem}

On the other hand, we prove that Theorem~\ref{t-young} extends at least partially to higher dimensions, in the following sense.

\begin{theorem}\label{t:3col}
  No non-bipartite normal quadrangulation of $\RP d$ for any $d\geq 2$ is
  $3$-colourable.
\end{theorem}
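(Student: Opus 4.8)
The plan is to establish the contrapositive: every $3$-colourable normal quadrangulation of $\RP d$ with $d\ge 2$ is bipartite. So suppose $G=\CC^{(1)}$ is the $1$-skeleton of a cubical complex $\CC$ with $|\CC|\cong\RP d$ and that $c\colon V(G)\to\ZZ_3$ is a proper $3$-colouring. The idea is to promote $c$ to a continuous map $\RP d\to S^1$ and then use that $\pi_1(\RP d)\cong\ZZ_2$ admits no nontrivial homomorphism into $\pi_1(S^1)\cong\ZZ$.

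Regard $\ZZ_3=\{0,1,2\}$ as the vertex set of the triangle $C_3$, realised geometrically as the circle $S^1$ divided into three arcs, the arc joining $i$ and $j$ being the one that avoids the third vertex. Since $c$ is proper, $\{c(u),c(v)\}$ is an edge of $C_3$ for every $uv\in E(G)$; mapping $uv$ homeomorphically onto the corresponding arc therefore defines a continuous map $f_1\colon|\CC^{(1)}|\to S^1$ with $f_1(v)=c(v)$ for each vertex $v$. I then extend $f_1$ over the higher cells, one dimension at a time. A $2$-cell of $\CC$ is a square whose boundary is a $4$-cycle $v_1v_2v_3v_4$ of $G$, and along this cycle the colour changes by $\delta_k=c(v_{k+1})-c(v_k)\in\{+1,-1\}\pmod 3$, which is nonzero because $c$ is proper; hence the loop that $f_1$ describes on the boundary of the square has winding number $\frac{1}{3}\sum_{k=1}^{4}\delta_k$ in $S^1$. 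A sum of four terms $\pm 1$ that is divisible by $3$ must be $0$, so this winding number is $0$, the boundary loop is nullhomotopic, and $f_1$ extends over the square. Over a cell of dimension $m\ge 3$ the map is already defined on the boundary sphere $S^{m-1}$ and extends over the cell because $\pi_{m-1}(S^1)=0$ (the universal cover $\RR$ of $S^1$ is contractible). Inductively we obtain a continuous map $f\colon|\CC|=\RP d\to S^1$ that restricts to $f_1$ on $G$.

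Because $d\ge 2$ we have $\pi_1(\RP d)\cong\ZZ_2$, so the induced homomorphism $f_*\colon\pi_1(\RP d)\to\pi_1(S^1)\cong\ZZ$ is trivial (equivalently, $f^*$ kills the generator of $H^1(S^1;\ZZ)$, as $H^1(\RP d;\ZZ)=0$). Consequently, for every cycle $\gamma$ of $G$, regarded as a loop in $\RP d$, the loop $f\circ\gamma=f_1\circ\gamma$ has winding number $0$ around $S^1$. Applying the computation above to $\gamma$: if $\gamma$ has length $n$ and $p$, $q$ are the numbers of its steps with $\delta=+1$ and $\delta=-1$, then $p+q=n$ while $p-q=3\cdot(\text{winding number})=0$; hence $n=2p$ is even. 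So $G$ has no odd cycle and is bipartite, as required.

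The only step with genuine content is the winding-number computation on $2$-cells — the elementary observation that a sum of four $\pm1$'s divisible by $3$ vanishes — which is exactly the arithmetic underpinning Youngs' theorem; the remainder is routine point-set and algebraic topology. One should also fix the combinatorial meaning of a $2$-cell of a cubical complex, namely that its boundary is a $4$-cycle of $G$; even if one permitted a degenerate square whose boundary were a shorter even closed walk, the same parity argument would apply to it verbatim, so nothing is lost. In the case $d=2$ this argument reproves the assertion that a non-bipartite quadrangulation of $\RP 2$ is not $3$-colourable, which is part of Theorem~\ref{t-young}.
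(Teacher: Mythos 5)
Your proof is correct, and it takes a genuinely different route from the paper's. The paper works with an arbitrary proper colouring by $\{1,2,3,4\}$ and constructs, dually to three of the colour-pair edge classes, closed $(d-1)$-submanifolds $Y_2,Y_3,Y_4$ of $\RP d$ (generalising the curves in its reproof of Youngs' theorem); the odd cycle forces each $[Y_i]$ to be non-trivial in $H_{d-1}(\RP d;\ZZ_2)$, and the intersection-form (cup-product) structure of $\RP d$ --- invoked via Thom representability, transversality and Poincar\'e duality --- forces $Y_i\cap Y_j$ to meet the $2$-skeleton, producing a $2$-cube on which all four colours appear. You instead start from a $3$-colouring, realise it as a continuous map to $S^1$, extend over each square using the arithmetic fact that four $\pm1$'s summing to a multiple of $3$ must sum to $0$, extend over higher cubes since $\pi_{m-1}(S^1)=0$ for $m\geq 3$, and then use $\operatorname{Hom}(\pi_1(\RP d),\ZZ)=\operatorname{Hom}(\ZZ_2,\ZZ)=0$ to conclude that every cycle of the $1$-skeleton has winding number $0$ and hence even length. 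Your argument is more elementary (no intersection theory, no representing homology classes by submanifolds) and applies verbatim to any cubical complex whose fundamental group admits no non-trivial homomorphism to $\ZZ$, so it also covers the lens spaces of Corollary~\ref{cor:3col_of_RPd} and correctly fails for $T^2\times S^1$, where the paper notes the conclusion is false. What it does not give is the paper's stronger local conclusion: that under any proper $4$-colouring some single $2$-cube (indeed, for $\RP d$ with $d\geq 3$, some $3$-cube) already carries all four colours.
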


The proof employs a novel application of intersection theory in homology, which is equivalently expressed in terms of the ring structure of cohomology groups.

This paper is organized as follows.  In Section~\ref{s:complete}, we
prove Theorem~\ref{t:complete}.  In Section~\ref{sec:RP2}, we provide
an alternative proof of Theorem~\ref{t-young}; extending this approach
to higher dimensions, we prove Theorem~\ref{t:3col} in Section~\ref{sec:RPd},
which follows as a corollary (Corollary~\ref{cor:3col_of_RPd}) of a more general result (Theorem~\ref{thm:3col_of_X}).

\section{Complete graphs in projective normal quadrangulations}
\label{s:complete}

In this section, we will show that normal quadrangulations of $\RP 3$
may contain arbitrarily large complete subgraphs, proving
Theorem~\ref{t:complete}. To this end, we will use the folloing corollary of a theorem established in~\cite{Erickson}.

\begin{theorem} [Erickson, 2014] \label{t:jeff} Let $S$ be a closed
  surface in $\mathbb{R}^3$ and let $Q$ be a bipartite quadrangulation of $S$. 
  Then $Q$ extends to a normal quadrangulation of the interior domain of $S$.
\end{theorem}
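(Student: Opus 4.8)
The plan is to reconstruct the dual-sheet (``spatial twist'') correspondence of Thurston, Mitchell and Erickson between hexahedral complexes filling a bounded $3$-domain and arrangements of surfaces, and to show that bipartiteness of $Q$ is exactly what makes the single homological obstruction vanish. Write $D$ for the interior domain of $S$, a compact $3$-manifold with $\partial D=S$. The first step is to pass to the dual picture of $Q$: inside each quadrilateral join the midpoints of the two pairs of opposite edges by a pair of crossing arcs. Since each edge of $Q$ is shared by exactly two quadrilaterals, these arcs match up across edge midpoints into a finite system $\mathcal L$ of closed curves, generically immersed on $S$ with one transverse double point at the centre of each quadrilateral. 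These curves are precisely the traces on $S$ of the dual sheets of the hexahedral complex we wish to build.

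Next I would make the correspondence precise. A cubical complex $\mathcal C$ with underlying space $D$ carries a dual system of surfaces (``sheets''): for each cube, the three midplanes separating its opposite pairs of quadrilateral faces glue across shared faces into properly immersed surfaces $\Sigma_1,\dots,\Sigma_m$ in $D$, pairwise and self transverse, whose double curves are dual to the faces of $\mathcal C$, whose triple points are dual to the cubes, whose complementary regions are dual to the vertices, and which meet $S$ exactly along the dual loops of $\partial\mathcal C$. This construction is reversible: any generic system of surfaces in $D$ meeting $S$ transversally in $\mathcal L$, having only transverse double curves and triple points in its interior and locally modelled on one, two, or three coordinate planes, and whose complementary regions are all balls, dualizes back to a cubical complex of $D$ whose boundary quadrangulation is $Q$. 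It therefore suffices to produce such a system of sheets bounding $\mathcal L$.

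The construction of the sheets is where bipartiteness enters, through a $\ZZ_2$-homology argument. Regarded as a mod $2$ $1$-cycle on $S$, the system $\mathcal L$ is Poincar\'e dual to the parity class $q\in H^1(S;\ZZ_2)$ that assigns to each cycle of $Q$ its length modulo $2$: a cycle $\alpha$ in the $1$-skeleton crosses $\mathcal L$ once at each edge it traverses, so the mod $2$ intersection number $\mathcal L\cdot\alpha$ equals $\operatorname{length}(\alpha)\bmod 2$. Since such cycles $\alpha$ generate $H_1(S;\ZZ_2)$ and $Q$ is bipartite, every $\operatorname{length}(\alpha)$ is even, whence $\mathcal L\cdot\alpha=0$ for all classes and, by nondegeneracy of the mod $2$ intersection form on the closed surface $S$, $[\mathcal L]=0$ in $H_1(S;\ZZ_2)$; a fortiori $[\mathcal L]=0$ in $H_1(D;\ZZ_2)$. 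Hence $\mathcal L$ is the boundary of a properly immersed mod $2$ surface in $D$ transverse to $S$, which I would perturb into general position so that its interior singularities are only transverse double curves and triple points, keeping the boundary equal to $\mathcal L$.

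The hard part will be the final verification: forcing every complementary region of the arrangement to be a ball, so that its dual is a genuine cubical complex and not merely a CW-decomposition of $D$. I would handle this by refinement: a region that fails to be a ball can be cut into balls by inserting auxiliary closed sheets (embedded spheres or boundary-parallel copies pushed into the interior), an operation that subdivides the prospective complex without altering the boundary loops $\mathcal L$, and hence leaves the induced quadrangulation of $S$ equal to $Q$. This is the real technical heart of the argument and is exactly what Erickson's theorem guarantees can be carried out under the evenness hypothesis, of which bipartiteness is a special case. Once every region is a ball and every point of the arrangement is locally standard, dualizing produces a cubical complex $\mathcal C$ with underlying space $D$ in which each $2$-face is a quadrilateral and $\partial\mathcal C=Q$; its $1$-skeleton is then a normal quadrangulation of $D$ extending $Q$, as required.
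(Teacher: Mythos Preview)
The paper does not prove this statement: Theorem~\ref{t:jeff} is quoted as a corollary of Erickson's hex-meshing theorem and used as a black box in the proof of Theorem~\ref{t:complete}. There is therefore no ``paper's own proof'' to compare your proposal against.

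As for the proposal itself, your outline follows the spatial-twist-continuum strategy of Thurston, Mitchell and Erickson and correctly identifies bipartiteness as the condition making the boundary dual curves $\mathcal L$ null-homologous mod~$2$. However, the sketch is not a self-contained proof: at the crucial step of forcing every complementary region of the sheet arrangement to be a ball you write that ``this is exactly what Erickson's theorem guarantees can be carried out,'' which is circular. The genuine work in Erickson's argument lies precisely in that refinement step (buffer layers, controlled subdivision, handling of the topology of the sheets and their triple points), and nothing in your write-up supplies it. You also do not verify the parity condition on the number of boundary quadrilaterals that Erickson's theorem requires in addition to null-homology; bipartiteness alone does not obviously give this, and the paper's applications in the proof of Theorem~\ref{t:complete} check it separately.
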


Let us proceed to the proof of the main result of this section.

\begin{proof}[Proof of Theorem~\ref{t:complete}]
Let $n=2k+1$ be an odd integer at least 3,
and we construct a normal quadrangulation of $\RP 3$
containing the complete graph with $n+1$ vertices
in the following argument.

Consider the 3-dimensional space with $x$-, $y$-, and $z$-axes.  On
the $xz$-plane, denoted by $P_{y=0}$, we let $R$ be the triangular
region bounded by the $z$-axis and two straight lines $z=-x+2$ and
$z=x-2$.  (See Figure~\ref{xyz} (left).)  Let $B_0$
denote the 3-manifold obtained by rotating $R$ around the $z$-axis.
Note that $B_0$ is homeomorphic to a $3$-dimensional ball.  Let
$\partial B_0$ denote the boundary of $B_0$, which is homeomorphic to
a 2-dimensional sphere. We first construct a quadrangulation of
$\partial B_0$ as follows:

\begin{figure}[htb]
\begin{center}
\includegraphics[width=110mm]{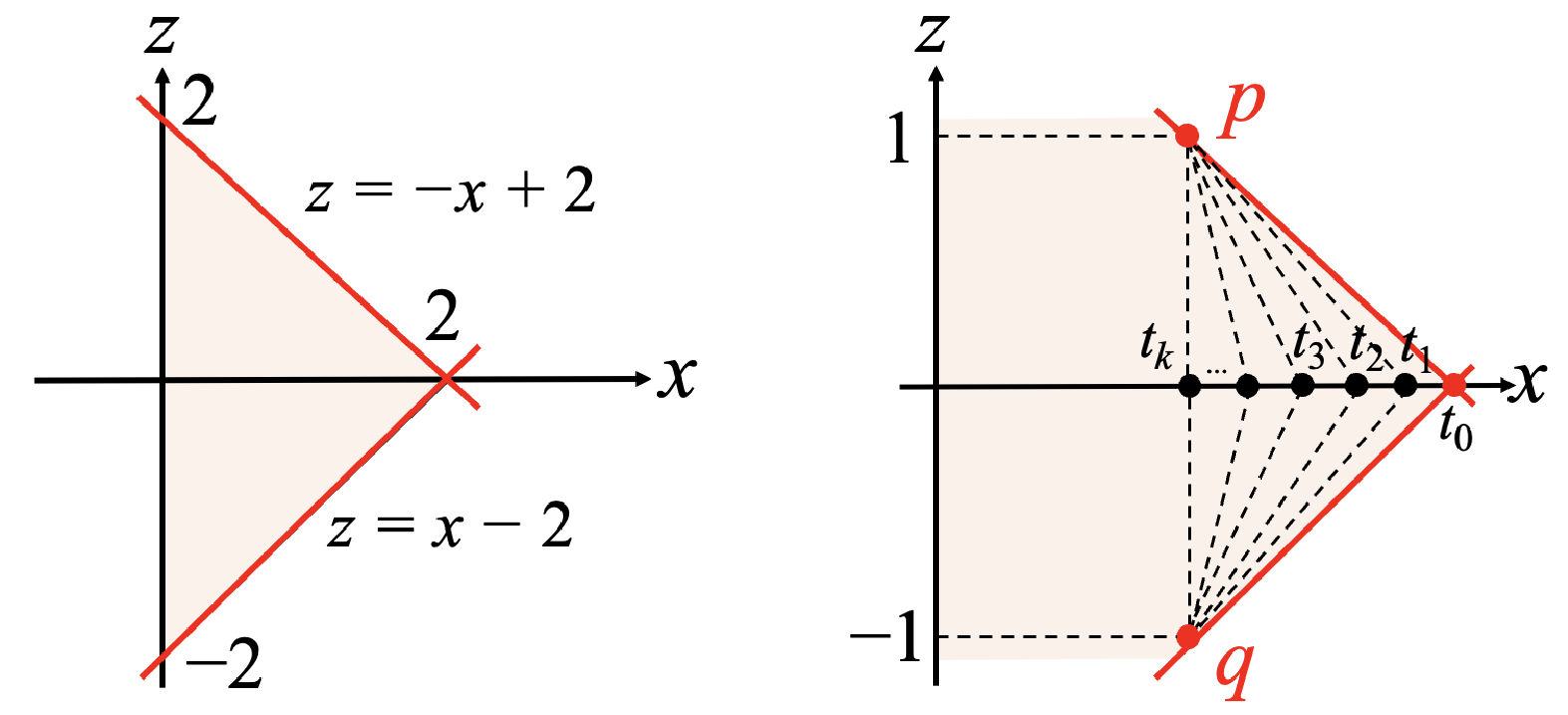}
\end{center}
\caption{Left: A triangular region $R$ in the proof of
  Theorem~\ref{t:complete}. Right: and segments $\boldsymbol{p}\boldsymbol{t}_i$
  and $\boldsymbol{q}\boldsymbol{t}_i$, $i=0,\dots,k$.}
\label{xyz}
\end{figure}

Let $\boldsymbol{p}=(1,0,1)$ and $\boldsymbol{q}=(1,0,-1)$,
and let $P$ and $Q$ be the orbits of $\boldsymbol{p}$ and $\boldsymbol{q}$ in $B_0$,
respectively, by the rotation of $R$ around the $z$-axis.
Put distinct $2n$ vertices $v_1,u_1, \ldots, v_{n}, u_{n}$ along $P$ in this order,
and $2n$ vertices $-v_1,-u_1, \ldots, -v_{n}, -u_{n}$ along $Q$ so that
$v_j$ and $-v_j$, and $u_j$ and $-u_j$
are located in the antipodal position on $\partial B_0$,
for all $j=1, \ldots, n$.
We regard $P$ and $Q$ as $2n$-cycles of the graph we construct,
and colour each $v_i$ and $-u_i$ black and each $u_i$ and $-v_i$ white.
Put a vertex $v_0$ and $-v_0$ on the points $(0,0,2)$ and $(0,0,-2)$, respectively.
Join $v_0$ to $v_1,\ldots,v_{n}$, and $-v_0$ to $-v_1,\ldots,-v_{n}$ on $\partial B_0$.
(See Figure~\ref{t0} (left).)

\begin{figure}[htb]
\begin{center}
\includegraphics[width=130mm]{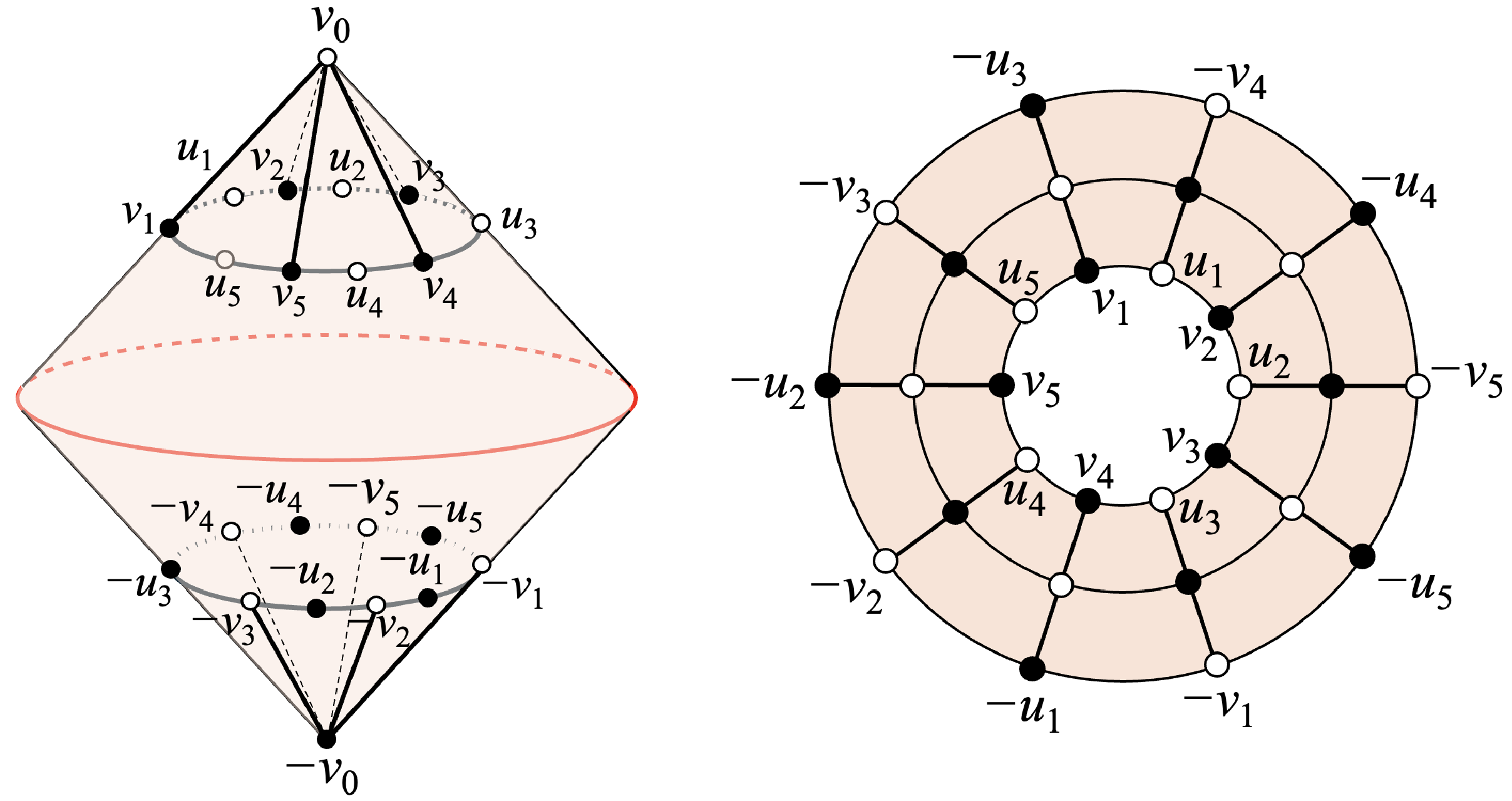}
\end{center}
\caption{Left: The ball $B_0$ in the proof of
  Theorem~\ref{t:complete} with $k=2$. Right: The even-sided graph $H$.}
\label{t0}
\end{figure}

Next we quadrangulate $A_0$ on $\partial B_0$, where $A_0$ is the
annulus on $\partial B_0$ bounded by $P$ and $Q$.  Let $H$ be the
Cartesian product of a cycle of length $2n$ and a path of length $2$
embedded on the annulus $A_0$, as shown in Figure~\ref{t0} (right). We
require that the subgraph of $H$ induced by the boundary of $A_0$ is
precisely the cycles $P$ and $Q$, and $v_1$ and $-u_{k+1}$ are joined
by a path of length $2$, so that the embedding of $H$ is symmetric
about the origin. It is readily seen that $H$ is a bipartite graph
with an even number of faces. Moreover, by the construction of the
graph, we have a quadrangulation, denoted by $G$, of $\partial B_0$
which has a point-symmetry $\rho$ with respect to the origin
$(0,0,0)$.

Let $\boldsymbol{t}_i$ be the $k+1$ points of $R$ with the coordinates
$$\boldsymbol{t}_i= \left( 2-\frac i{k}, 0,0 \right),$$
for $i=0,1,\ldots,k$.
(See Figure~\ref{xyz} (right).)
In the interior domain in $B_0$,
we take the line segments $\boldsymbol{pt}_i$ and $\boldsymbol{qt}_i$ in $R$ on $P_{y=0}$,
for $i=1,\ldots,k$, and we let $A_i$ be the annulus obtained
by rotating $\boldsymbol{pt}_i \cup \boldsymbol{qt}_i$ around the $z$-axis.
(Recall that $A_0$ can be viewed as the annulus obtained by this construction with $\boldsymbol{pt}_0 \cup \boldsymbol{qt}_0$.)
Then, for each $i=0,1,\ldots,k$, $A_i$ has $P$ and $Q$ as its boundary components.
For each $i=1,\ldots,k$,
we join $v_j$ to $-v_{j+i}$ and join $u_j$ to $-u_{j+i}$ with an edge on $A_i$, respectively,
for $j=1,\ldots,n$,
where the subscripts are taken modulo $n$.
See Figure~\ref{t1t2}.
If we let $G_i$ be the resulting graph on $A_i$,
then $G_i$ is a bipartite quadrangulation of $A_i$
with even number of faces, for $i=1,\ldots,k$.
In the graph $G_1 \cup \cdots \cup G_k$, 
we observe that for $j=1,\ldots, n=2k+1$, $v_j$ is adjacent to $-v_{j+1}, -v_{j+2}, \ldots, -v_{j+k}$.
 
\begin{figure}[htb]
\begin{center}
\includegraphics[width=130mm]{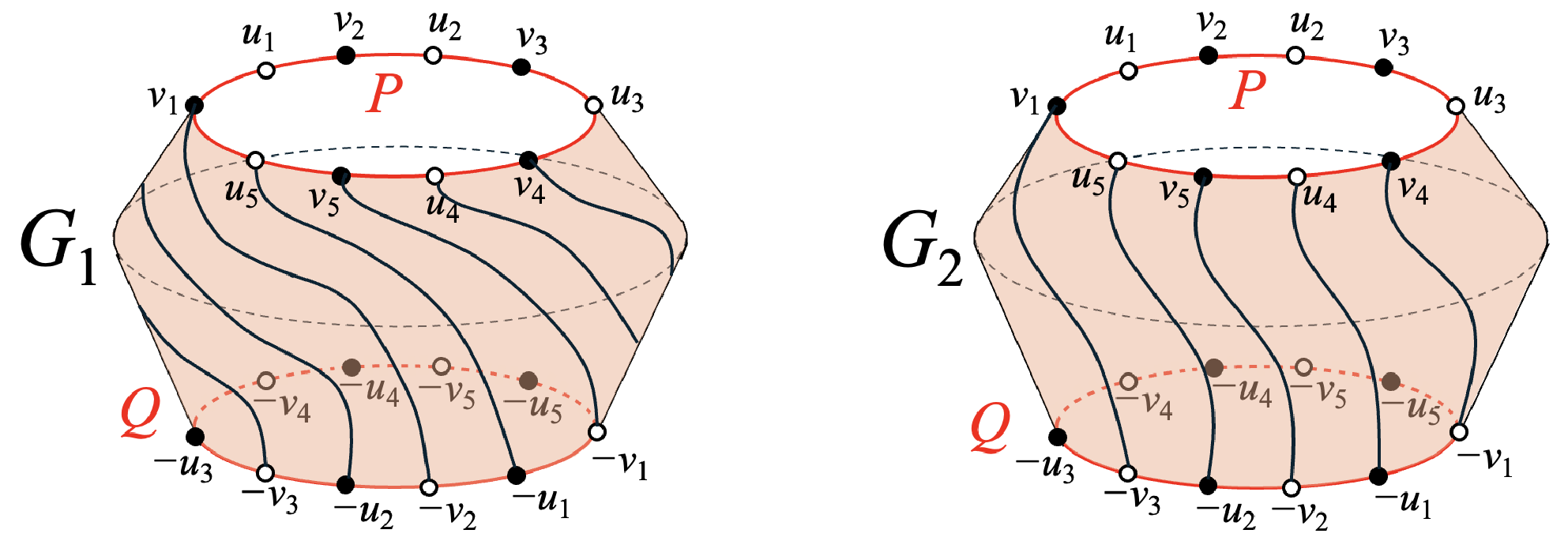}
\end{center}
\caption{Annuli $A_1$ and $A_2$ in the proof of
  Theorem~\ref{t:complete}.}
\label{t1t2}
\end{figure}

Finally, we decompose the interior domain of $B_0$ into cubes. Let
$T_i$ be the solid torus whose boundary is obtained from
$A_{i-1}$ and $A_i$ by pasting them along $P$ and $Q$, for
$i=1,\ldots,k$.  Then we see that the annuli $A_1,\ldots, A_k$
decompose $B_0$ into $T_1,\ldots,T_k$ and a single ball, say $B$,
containing $v_0$, $-v_0$ and $A_k$.  Note that the graph on the
boundary of $T_i$ for each $i$, as well as the graph on $B$, are
bipartite quadrangulations with an even number of faces. Now applying
Theorem~\ref{t:jeff} to each of them, we get a normal quadrangulation
of the interior domain of $B_0$.  Recall that $G$ is the
quadrangulation of the boundary $\partial B_0$.  Since $G$ has the
point-symmetry $\rho$ with respect to $(0,0,0)$, we can identify the
antipodal points on $\partial B_0$ and get a normal quadrangulation of
$\RP 3$.  Let $\tilde{G}$ denote its $1$-skeleton.  Thus, by the
construction, $\tilde{G}$ contains a complete graph on $n+1$ vertices
$\tilde{v}_0, \tilde{v}_1, \ldots, \tilde{v}_n$, where $\tilde{v}_i$
represents the vertex obtained by identifying $v_i$ and $-v_i$ through
the symmetry $\rho$, for $i=0,1,\ldots,n$.
\end{proof}

\section{Alternative proof of Youngs' theorem}
\label{sec:RP2}

In this section,
we give an alternative proof for Youngs' theorem (Theorem~\ref{t-young}),
which states that 
no non-bipartite quadrangulation of the projective plane $\RP2$ 
is $3$-colourable.
By extending the ideas in this proof,
we will prove Theorem~\ref{t:3col} in the next section.

\begin{proof}[Alternative proof of Theorem~\ref{t-young}]
Let $\CC$ be a non-bipartite normal quadrangulation of $\RP2$,
that is,
a quadrangulation of $\RP2$ in the ordinary sense.
Suppose that we are given a proper colouring $c$ of $\CC^{(1)}$, the $1$-skeleton
of $\CC$, with colours $\Setx{1,2,3,4}$. We will show that $c$ 
uses all four colours.

For $t\in\Setx{2,3,4}$, let $E_t$ be the set of all edges of $\CC$
with their ends coloured either by $1$ and $t$, or by the
complementary two colours.
Consider a fixed $t\in\Setx{2,3,4}$. We begin by constructing 
closed curves $H_t$ in $\RP 2$ intersecting precisely those edges of $\CC$ which are
contained in $E_t$.

Let $M$ be the set of midpoints of all edges of $E_t$.  It is easy to
see that the boundary of any face $A$ contains an even number of
points of $M$. If it contains two, then we add a curve joining
them through $A$. If it contains four, then we add two
disjoint non-crossing curves whose ends are these four points. There are two ways
to choose these curves; we choose one arbitrarily.
For an example, see Figure~\ref{E123}.

\begin{figure}[htb]
\begin{center}
\includegraphics[width=100mm]{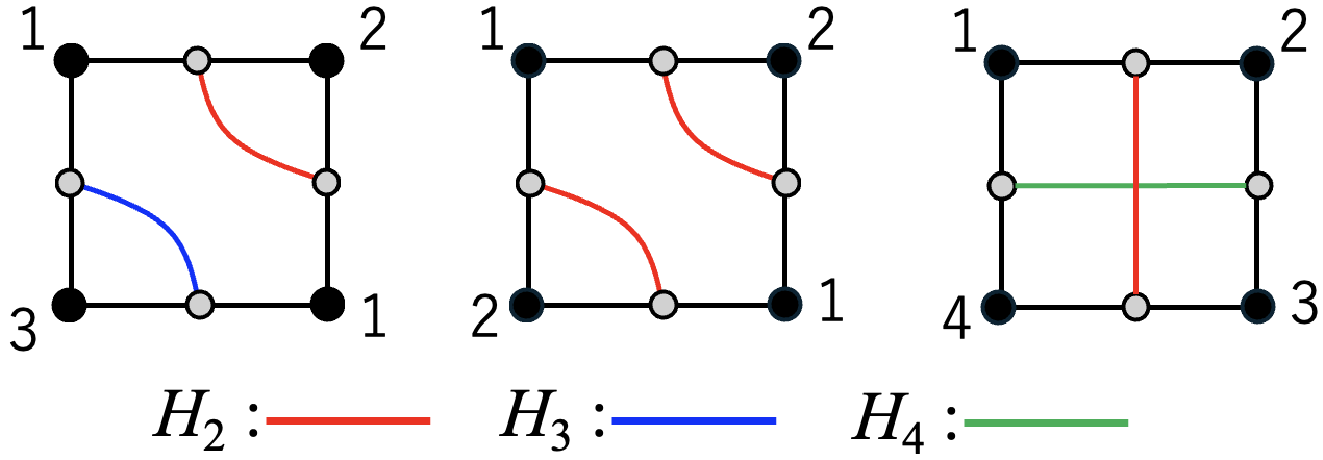}
\end{center}
\caption{Put curves in each face joining midpoints of the edges of $E_t$.}
\label{E123}
\end{figure}

In this way, each point of $M$ meets two of the added curves,
each embedded in one incident face.
Thus, if we define $H_t$
as the union of the curves as $A$ ranges over all faces,
it is a disjoint union of closed curves.
Observe that $\CC^{(1)}$ and $H_t$ intersect transversally,
and their intersection
is a set of points contained in $E_t$.

We have constructed closed curves $H_2$, $H_3$ and $H_4$ of $\RP2$.
Note that if there are two curves inside a face $A$, then they intersect
transversally, the vertices of
$A$ have all four colours, the curves belong to distinct closed
curves among $H_2, H_3$ and $H_4$, and the endpoints of each curve are
at opposite sides of the boundary of $A$.
See Figure~\ref{E123quad}.

\begin{figure}[htb]
\begin{center}
\includegraphics[width=120mm]{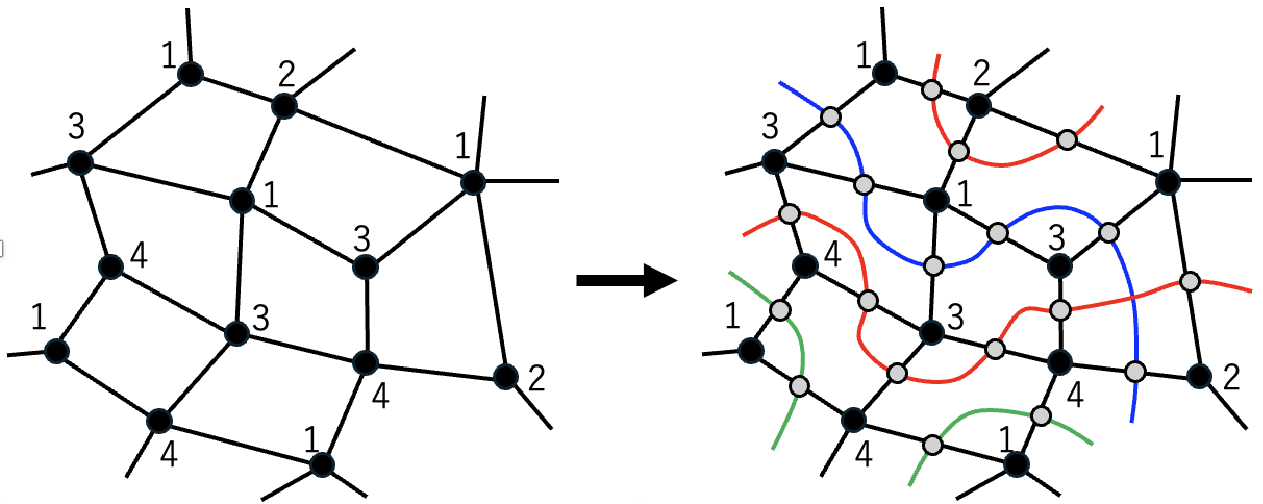}
\end{center}
\caption{Closed curves of $H_2, H_3, H_4$}
\label{E123quad}
\end{figure}

Consider an odd cycle $\gamma$ in $\CC^{(1)}$. We prove that $\gamma$ contains an
odd number of edges from $E_2$. It is sufficient to show that
$\size{E(\gamma) \cap (E_3\cup E_4)}$ is even. The colouring $c$ determines
a homomorphism from $\gamma$ to the complete graph $K_4$ on vertices
$1,2,3,4$. The edges of $E_3\cup E_4$ are precisely the edges mapped
to the set $\Setx{13,24,14,23} \subseteq E(K_4)$, which is an edge-cut
in $K_4$. The image of $\gamma$ under the homomorphism is a closed walk in
$K_4$, which has to use an even number of edges from this
edge-cut. Thus, $\size{E(\gamma) \cap (E_3\cup E_4)}$ is even as
claimed. It follows that $\gamma$ contains an odd number of edges of
$E_2$. By symmetry, the same holds for $E_3$ and $E_4$. Consequently,
$\gamma$ intersects each $H_t$ ($t=2,3,4$) in an odd number of points.

All homotopically trivial cycles of $\CC$ in $\RP2$
are obtained by the symmetric difference 
among boundary cycles of some faces,
and hence they are of even length.  Thus, the odd cycle $\gamma$ must
be nontrivial.  Since $\gamma$ intersects each $H_t$ ($t=2,3,4$)
transversally in an odd number of points, $H_t$ contains a nontrivial
curve on $\RP2$.  Thus, $H_2$ and $H_3$ intersect.  Considering a face
$A$ of $\CC$ such that $H_2$ and $H_3$ intersect inside $A$, the
definition of $H_2$ and $H_3$ implies that the vertices of $A$ must
have all four colours. This concludes the proof.
\end{proof}

\section{Higher-dimensional cases}
\label{sec:RPd}
This section is devoted to generalizing the argument in Section~\ref{sec:RP2} to higher-dimensional cases and proving Theorem~\ref{t:3col}. 
Let $H_k(X)$ (respectively, $H^k(X)$) denote the $k$th singular homology (respectively, cohomology) group of $X$ with coefficients in $\ZZ_2=\ZZ/2\ZZ$. 
For $\alpha\in H^i(X)$ and $\beta\in H^j(X)$, the cup product
$\alpha\smile \beta\in H^{i+j}(X)$ is defined, which makes
$\bigoplus_{k\geq 0} H^k(X)$ into a ring. 
See \cite[Chapter~VI, Section~4]{Bre} for the definition.

\begin{theorem}\label{thm:3col_of_X}
For $d\geq 2$, let $X$ be a connected closed $d$-dimensional manifold satisfying the following conditions.
\begin{enumerate}[label=\textup{(\arabic*)}]
    \item For any non-trivial element $\alpha\in H^{d-2}(X)$, there exists $\beta\in H^2(X)$ such that $\alpha\smile\beta \neq 0$.
    \item For any non-trivial elements $\alpha,\beta\in H^{1}(X)$, it holds that $\alpha\smile\beta \neq 0$.
\end{enumerate}
Then, no non-bipartite normal quadrangulation of $X$ is $3$-colourable.
\end{theorem}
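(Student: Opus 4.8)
The plan is to assume, for contradiction, that $X$ admits a non-bipartite normal quadrangulation that is $3$-colourable, and to derive a contradiction with condition~(2). So let $\CC$ be a cubical complex with $|\CC|=X$ whose $1$-skeleton $G:=\CC^{(1)}$ is non-bipartite, and let $c\colon V(G)\to\{1,2,3\}$ be a proper $3$-colouring. Fix an odd cycle $\gamma$ in $G$, and regard $\{1,2,3\}$ as the vertices of the triangle $K_3$, so that the geometric realisation $|K_3|$ is a circle $S^1$. I will use the elementary fact that any closed walk in $K_3$ traverses the three edges of $K_3$ a number of times of the same parity (for each vertex $k$ the walk traverses the two edges at $k$ a total of an even number of times, since it is closed). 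The first step is to extend $c$ to a continuous map $f\colon X\to S^1$: map each edge of $G$ homeomorphically onto the edge of $K_3$ spanned by the two colours of its ends. The boundary $4$-cycle of any $2$-cell of $\CC$ is then mapped to a closed walk of length $4$ in $K_3$, whose winding number $w$ satisfies $3|w|\le 4$ and (by the fact above) has the same parity as $4$; hence $w=0$, the walk is null-homotopic, and $f$ extends over that $2$-cell. Over cells of dimension $\ge 3$ the extension exists automatically because $\pi_n(S^1)=0$ for all $n\ge 2$, and we obtain $f\colon X\to S^1$.

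Let $g$ generate $H^1(S^1)$, and set $u:=f^*(g)\in H^1(X)$. If we take the cocycle representative of $g$ supported on one edge $\{i,j\}$ of $K_3$, then $u$ is represented by the indicator $\ZZ_2$-cochain of the set $E_{ij}$ of edges of $G$ whose ends are coloured $i$ and $j$. I claim $u\ne 0$. Evaluating $u$ on $\gamma$: the colouring gives a graph homomorphism $\gamma\to K_3$ whose image is a closed walk, so by the fact above the numbers $n_{12},n_{13},n_{23}$ of edges of $\gamma$ of the three colour-pair types all have the same parity; since their sum is the odd length of $\gamma$, each of them is odd, and therefore $\langle u,[\gamma]\rangle=n_{ij}\bmod 2=1$.

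The contradiction is now immediate: since $f^*$ is a ring homomorphism and $H^2(S^1)=0$, we have $u\smile u=f^*(g\smile g)=0$ in $H^2(X)$, whereas condition~(2), applied with $\alpha=\beta=u\ne 0$, gives $u\smile u\ne 0$. In intersection-theoretic terms, $u$ is Poincaré dual to a hypersurface of $X$, which can be realised as $f^{-1}(\mathrm{pt})$, and two distinct regular values lying on the \emph{same} edge of $K_3$ give a pair of disjoint such hypersurfaces, so the self-intersection class represented by $u\smile u$ is empty --- while condition~(2) would make it homologically non-trivial. One may also establish $u\smile u=0$ without building $f$: by condition~(1), equivalently the non-degeneracy of the Poincaré pairing $H^2(X)\otimes H^{d-2}(X)\to H^d(X)$, it suffices to check that $u\smile u\smile\alpha=0$ for every $\alpha\in H^{d-2}(X)$, which reduces to a cell-by-cell verification that the dual walls of $E_{12}$ and of $E_{13}$ can be drawn in disjoint neighbourhoods of the disjoint edge sets $E_{12}$ and $E_{13}$. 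Finally, for $X=\RP d$ with $d\ge 2$ both hypotheses hold since $H^*(\RP d)\cong\ZZ_2[x]/(x^{d+1})$, which yields Theorem~\ref{t:3col}.

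I expect the main obstacle to be the construction of $f$ --- equivalently, the consistent routing of the dual walls: one must confirm that the boundary walk of every $2$-cell has winding number $0$ (the combinatorial core being that a length-$4$ walk cannot wrap a triangle) and that no higher obstruction survives, which is where $\pi_n(S^1)=0$ for $n\ge 2$ enters. Everything after that is routine bookkeeping with the cup product and the Kronecker pairing.
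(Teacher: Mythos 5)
Your proof is correct, but it takes a genuinely different route from the paper's. The paper works with an arbitrary proper $4$-colouring and builds three dual closed hypersurfaces $Y_2,Y_3,Y_4$ cell by cell (resolving crossings in $2$-cells, then capping off inside higher cells via Lemma~\ref{lem:extension}); it shows each $[Y_i]\in H_{d-1}(X)$ is non-trivial by intersecting with an odd cycle, invokes condition~(2) to get $[Y_i]\cdot[Y_j]\neq 0$, and uses condition~(1) together with Thom realizability and an isotopy into a neighbourhood of $\CC^{(2)}$ (Lemma~\ref{lem:skeleton}) to conclude that some $2$-cube must carry all four colours --- a stronger structural statement from which non-$3$-colourability follows. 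You instead assume a $3$-colouring outright, extend it to a continuous map $f\colon X\to S^1=|K_3|$ (the winding-number parity argument for $2$-cell boundaries and $\pi_n(S^1)=0$ for the higher cells are both correct), and obtain a class $u=f^*(g)\in H^1(X)$ that is non-zero by the odd-cycle evaluation yet satisfies $u\smile u=f^*(g\smile g)=0$ because $H^2(S^1)=0$, contradicting condition~(2). Your argument uses only condition~(2) and avoids Thom's theorem, the isotopy lemma, and the explicit hypersurface constructions entirely, so it is shorter and in that sense proves a slightly stronger theorem (condition~(1) is not needed; indeed it holds automatically for closed manifolds by Poincar\'e duality over $\ZZ_2$). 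What the paper's longer route buys is the additional combinatorial information about $4$-colourings: every proper $4$-colouring places all four colours on some $2$-cube, and on $\RP{d}$ with $d\geq 3$ even on some $3$-cube, which your map-to-$S^1$ argument does not yield. Your alternative sketch of $u\smile u=0$ via condition~(1) and disjoint dual walls is closer in spirit to the paper's method but is left at the level of a sketch; the main line of your proof via $f$ is complete and does not depend on it.
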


\begin{remark}
Poincar\'{e} duality allows us to rephrase the above conditions as follows.
\begin{enumerate}[label=\textup{(\arabic*)}]
    \item For any non-trivial element $\alpha\in H_{d-2}(X)$, there exists $\beta\in H_{2}(X)$ such that $\alpha\cdot\beta \neq 0$.
    \item For any non-trivial elements $\alpha,\beta\in H_{d-1}(X)$, it holds that $\alpha\cdot\beta \neq 0$.
\end{enumerate}
Here, $\cdot$ denotes the intersection between homology classes.  If
submanifolds $Y$ and $Z$ intersect transversally, then
$[Y]\cdot [Z]=[Y\cap Z]$ holds, where $[Y]$ denotes the fundamental
class of $Y$.  See \cite[Chapter~VI, Section~11]{Bre} for details.
\end{remark}

The $d$-dimensional real projective space $\RP{d}$ satisfies the above conditions since its cohomology ring $H^\ast(\RP{d})$ is isomorphic to the quotient $\ZZ_2[w]/(w^{d+1})$ of a polynomial ring (see \cite[Chapter~VI, Proposition~10.2]{Bre} for instance).
For $p\geq 2$ and integers $q_1,\dots,q_n$ relatively prime to $p$, the lens space $L(p;q_1,\dots,q_n)$ is defined, for example, $L(2;1,\dots,1) = \RP{2n-1}$.
See \cite[Chapter~III, Example~4.7]{Bre} for the precise definition.
If $p$ is even, then $H^\ast(L(p;q_1,\dots,q_n))$ is isomorphic to $H^\ast(\RP{d})$ as rings.
Therefore, we obtain the next result as a consequence of Theorem~\ref{thm:3col_of_X}. 

\begin{corollary}\label{cor:3col_of_RPd}
No non-bipartite normal quadrangulations of $\RP{d}$ and $L(p;q_1,\dots,q_n)$ for $p$ even is $3$-colourable.
\end{corollary}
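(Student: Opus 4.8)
The plan is to deduce Corollary~\ref{cor:3col_of_RPd} directly from Theorem~\ref{thm:3col_of_X}, using no topological input about normal quadrangulations beyond the theorem itself. Since $\RP d$ (for $d\geq 2$) and the lens spaces $L(p;q_1,\dots,q_n)$ are connected closed manifolds of dimension at least $2$ (taking $n\geq 2$ in the lens-space case, so that $d=2n-1\geq 3$), the entire task reduces to verifying that conditions (1) and (2) of Theorem~\ref{thm:3col_of_X} hold for these spaces; the conclusion then follows by applying the theorem to each such $X$.

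First I would record the common ring-theoretic picture. By the facts quoted just before the corollary, $H^\ast(\RP d)\cong\ZZ_2[w]/(w^{d+1})$ with $w$ in degree $1$, and for $p$ even there is a graded ring isomorphism $H^\ast(L(p;q_1,\dots,q_n))\cong H^\ast(\RP{d})$ with $d=2n-1$. Hence in every case the cohomology ring is $R=\ZZ_2[w]/(w^{d+1})$ with $|w|=1$, so that $H^k\cong\ZZ_2$ is spanned by $w^k$ for $0\leq k\leq d$ and vanishes otherwise, while the only non-zero products are $w^i\smile w^j=w^{i+j}$ whenever $i+j\leq d$. Because conditions (1) and (2) are phrased purely in terms of the graded ring structure, it suffices to verify them once in $R$; they then transfer to the lens spaces through the isomorphism.

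Next I would check the two conditions in $R$. For condition (1), a non-trivial $\alpha\in H^{d-2}(X)$ must equal $w^{d-2}$, the unique non-zero class in that degree since the coefficients are $\ZZ_2$; choosing $\beta=w^2\in H^2(X)$, which is non-zero because $d\geq 2$, gives $\alpha\smile\beta=w^d\neq 0$. For condition (2), any non-trivial $\alpha,\beta\in H^1(X)$ both equal $w$, so $\alpha\smile\beta=w^2\neq 0$, again using $d\geq 2$. Thus both conditions hold for every $X$ under consideration, and Theorem~\ref{thm:3col_of_X} yields the corollary.

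I do not anticipate a genuine obstacle, since the whole content resides in Theorem~\ref{thm:3col_of_X} and the corollary is a verification step. The only point requiring minor care is that the cited isomorphism for lens spaces must respect the grading, so that the degree-specific conditions (1) and (2) really do transfer; this is automatic, as the relevant cohomology ring isomorphisms are graded and send the degree-$1$ generator to the degree-$1$ generator.
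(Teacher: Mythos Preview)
Your proposal is correct and follows exactly the paper's own argument: the paragraph preceding the corollary verifies conditions (1) and (2) of Theorem~\ref{thm:3col_of_X} by invoking the ring isomorphism $H^\ast(\RP d)\cong\ZZ_2[w]/(w^{d+1})$ and the stated fact that $H^\ast(L(p;q_1,\dots,q_n))\cong H^\ast(\RP d)$ as rings for $p$ even, and your write-up simply spells out the easy cup-product computations ($w^{d-2}\smile w^2=w^d\neq 0$ and $w\smile w=w^2\neq 0$) that make this explicit.
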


In contrast, for a closed surface $\Sigma$, the cohomology ring of the closed $3$-manifold $\Sigma\times S^1$ does not satisfy the condition (2), and we can easily find a non-bipartite normal quadrangulation of the $3$-torus $T^2\times S^1$ whose $1$-skeleton is $3$-colourable.

Let $\CC$ be a normal quadrangulation of a closed $d$-manifold $X$.
We write $\CC^{(k)}$ for the $k$-skeleton of $\CC$.

Throughout the rest of this section, we assume that a colouring of $\CC^{(1)}$ with colours $1,\dots,4$ is
given. 
We obtain ``hypersurfaces'' $H_2$, $H_3$, and $H_4$ as in Section~\ref{sec:RP2}.
More precisely, we consider $d\cdot 2^{d-1}$ $(d-1)$-dimensional cubes in $[0,1]^d$ obtained from the cubes $[0,1]^{k-1}\times\{1/2\}\times[0,1]^{d-k}$ ($k=1,\dots,d$) by cutting them along their intersections.
For each edge of $[0,1]^d$, there is a unique small hyperplane intersecting the edge.
We now define $H_t$ to be the union of small hyperplanes $H$ in $\CC$ such that the ends of the edge corresponding to $H$ are colored by $1$ and $t$, where $t\in\{2,3,4\}$.

\begin{remark}
  The proof does not assume that our normal quadrangulations are combinatorial
  cubulations (i.e., that the link of each $k$-cube is a
  piecewise-linear $(d-k-1)$-sphere).  See \cite[Section~2]{Funar}.
\end{remark}

\begin{lemma}\label{lem:skeleton}
Let $\CC$ be a quadrangulation of a closed $d$-manifold $X$ satisfying the condition $(1)$, where $d\geq 2$.
Let $Y$ be a closed $(d-2)$-submanifold with $Y\cap \CC^{(2)}=\emptyset$.
Then, the homology class $[Y]\in H_{d-2}(X)$ is trivial.
\end{lemma}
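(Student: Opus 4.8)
The plan is to show directly that $[Y]\cdot\beta = 0$ for every $\beta\in H_2(X)$; by condition~$(1)$, in the homological form recalled in the Remark following Theorem~\ref{thm:3col_of_X}, this forces $[Y]=0$ in $H_{d-2}(X)$. (All (co)homology here is taken with $\ZZ_2$-coefficients.) The intersection product will be computed algebraically: writing $\mathrm{PD}(Y)\in H^2(X)$ for the Poincar\'e dual of $[Y]$, one has $[Y]\cdot\beta = \langle \mathrm{PD}(Y),\beta\rangle$, where $\langle\,\cdot\,,\,\cdot\,\rangle$ denotes the Kronecker pairing $H^2(X)\times H_2(X)\to \ZZ_2$; equivalently, $[Y]\cdot\beta$ equals $\mathrm{PD}(Y)\smile\mathrm{PD}(\beta)$ evaluated on the fundamental class $[X]$.

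Two ingredients are needed. First, the inclusion $j\colon \CC^{(2)}\hookrightarrow X$ induces a surjection $j_\ast\colon H_2(\CC^{(2)})\to H_2(X)$: indeed $\CC$ is a cubical, hence CW, complex whose $2$-skeleton is $\CC^{(2)}$, so the pair $(\CC,\CC^{(2)})$ has relative cells only in dimensions $\geq 3$; thus $H_2(\CC,\CC^{(2)})=0$, and the long exact sequence of the pair yields the claim. (Only the CW structure of $\CC$ is used here, not that it is a combinatorial cubulation, in line with the Remark above.) Second, $\mathrm{PD}(Y)$ is supported near $Y$: over $\ZZ_2$ the closed submanifold $Y$ carries a fundamental class and its normal bundle carries a Thom class, so $\mathrm{PD}(Y)$ is the image of this Thom class under $H^2(X,X\setminus Y)\to H^2(X)$ (see \cite[Chapter~VI, Section~11]{Bre}). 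By exactness of the cohomology sequence of $(X,X\setminus Y)$, the restriction of $\mathrm{PD}(Y)$ to $H^2(X\setminus Y)$ vanishes. Since $Y\cap\CC^{(2)}=\emptyset$, the map $j$ factors as $\CC^{(2)}\hookrightarrow X\setminus Y\hookrightarrow X$, and hence $j^\ast\mathrm{PD}(Y)=0$ in $H^2(\CC^{(2)})$.

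It remains to combine these via naturality of the Kronecker pairing. For an arbitrary $\beta\in H_2(X)$, pick $\gamma\in H_2(\CC^{(2)})$ with $\beta = j_\ast\gamma$; then
\[
 [Y]\cdot\beta \;=\; \langle \mathrm{PD}(Y),\,j_\ast\gamma\rangle \;=\; \langle j^\ast\mathrm{PD}(Y),\,\gamma\rangle \;=\; 0 .
\]
Hence $[Y]\cdot\beta = 0$ for every $\beta\in H_2(X)$, and condition~$(1)$ then gives $[Y]=0$. The only step carrying geometric content is the localization of $\mathrm{PD}(Y)$ near $Y$ (the Thom-class description): it is precisely this that converts the disjointness hypothesis $Y\cap\CC^{(2)}=\emptyset$ into the vanishing $j^\ast\mathrm{PD}(Y)=0$, everything else being formal. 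Alternatively, one could argue geometrically — perturbing a $2$-cycle representing $\beta$ into a small neighbourhood of the compact set $\CC^{(2)}$, which stays disjoint from the compact set $Y$, and invoking that a transverse (here empty) intersection computes the product — but the cohomological argument avoids having to represent $\beta$ by a submanifold.
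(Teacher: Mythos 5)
Your argument is correct, and it takes a genuinely different route from the paper's. The paper argues by contradiction: assuming $[Y]\neq 0$, condition (1) supplies $\beta\in H_2(X)$ with $[Y]\cdot\beta\neq 0$; it then invokes Thom's realizability theorem \cite{Tho54} to represent $\beta$ by a closed surface $\Sigma$, and pushes $\Sigma$ by successive isotopies (retracting off interior points of cells of decreasing dimension) into a neighbourhood of $\CC^{(2)}$ disjoint from $Y$, forcing $[Y]\cdot\beta=0$. Your proof replaces both of these geometric steps by algebra: the surjectivity of $H_2(\CC^{(2)})\to H_2(X)$, obtained from the long exact sequence of the CW pair (which is exactly the algebraic shadow of the paper's ``push $\Sigma$ into a neighbourhood of $\CC^{(2)}$'' step), together with the localization of $\mathrm{PD}(Y)$ in $H^2(X,X\setminus Y)$ via the Thom class, and naturality of the Kronecker pairing. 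What your version buys is that it avoids Thom's representability theorem and the somewhat informally described iterated isotopy, and it isolates precisely where the hypothesis $Y\cap\CC^{(2)}=\emptyset$ enters (namely in $j^\ast\mathrm{PD}(Y)=0$); it is also insensitive to regularity issues about representing $\beta$ by an embedded surface. What the paper's version buys is consistency with the intersection-theoretic language used throughout Section~4 and a picture that is easier to visualize. One small point to keep in mind: the localization statement for $\mathrm{PD}(Y)$ requires $Y$ to admit a Thom class for its normal (micro)bundle, or else the more general duality $\check{H}^2(X,X\setminus Y)\cong H_{d-2}(Y)$ for closed subsets; in the application $Y$ is a transverse intersection of piecewise-linearly constructed hypersurfaces, so this is unproblematic over $\ZZ_2$, but it is the same level of care the paper itself takes when it speaks of fundamental classes and transverse intersections.
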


\begin{proof}
  The assumption $Y\cap \CC^{(2)}=\emptyset$ implies that there exists
  a neighborhood $U$ of $\CC^{(2)}$ disjoint from $Y$.  Assume $[Y]$
  is non-trivial for a contradiction.  Then the condition (1) implies
  that there is $\beta\in H_2(X)$ with $[Y]\cdot \beta\neq 0$.  By
  \cite{Tho54}, we have a submanifold $\Sigma$ representing $\beta$,
  i.e., $[\Sigma]=\beta$.  Note here that for any point $p$ in the
  interior of a $k$-dimensional disk $D^k$, the complement
  $D^k\setminus\{p\}$ deformation retracts to $\partial D^k=S^{k-1}$.
  Thus, by an isotopy, we may assume $\Sigma$ lies in a neighborhood
  of $\CC^{(d-1)}$.  Continuing this process, we deform $\Sigma$ so
  that it is contained in the neighborhood $U$ since $\Sigma$ is of
  dimension $2$.  Hence, $Y\cap \Sigma= \emptyset$, and thus,
  $[Y]\cdot \beta=0$.  This is a contradiction.
\end{proof}

For example, in the case $X=\RP{d}$, the above submanifold $\Sigma$ can be chosen as $\RP{2}$ embedded in a standard way.

\begin{lemma}\label{lem:extension}
For $k\geq 2$, let $Y$ be a \textup{(}possibly disconnected\textup{)} closed $(k-1)$-submanifold of $S^{k}=\partial D^{k+1}$.
Then, $Y$ bounds a properly embedded compact $k$-manifold in $D^{k+1}$, that is, there exists a compact submanifold $W$ of $D^{k+1}$ satisfying $\partial W=Y$ and $W\cap S^{k} = \partial W$. 
\end{lemma}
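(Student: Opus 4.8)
The plan is to produce the bounding manifold $W$ directly by pushing the given submanifold $Y\subseteq S^k$ radially into the interior of $D^{k+1}$ and then capping it off. Concretely, identify a collar neighbourhood of $S^k$ in $D^{k+1}$ with $S^k\times[0,1]$, where $S^k\times\{0\}=\partial D^{k+1}$, and let $r\colon D^{k+1}\setminus\{0\}\to S^k$ be the radial retraction. Inside the collar put the cylinder $Y\times[0,1]$; this is a properly embedded compact $k$-manifold with boundary $Y\times\{0\}=Y$ together with a copy $Y'=Y\times\{1\}$ sitting on the sphere $S'$ of radius $1/2$. It therefore suffices to cap $Y'$ off inside the ball $D'$ of radius $1/2$ by some compact $k$-manifold $W_0$ with $\partial W_0=Y'$ and $W_0\cap S'=\partial W_0$; gluing $W_0$ to the cylinder along $Y'$ yields the desired $W$. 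Thus the statement is equivalent to showing that \emph{every} closed $(k-1)$-submanifold of $S^k$ bounds a properly embedded compact $k$-manifold in $D^{k+1}$, and we are free to assume $Y$ is as nice as we like (e.g. lying in a small ball, or null-homotopic) after the radial push has been used once.

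The cleanest way to finish is via an obstruction-theoretic / homological argument. By Alexander duality (with $\ZZ_2$ coefficients), a closed $(k-1)$-submanifold $Y$ of $S^k$ separates $S^k$ and hence $[Y]=0$ in $H_{k-1}(S^k;\ZZ_2)$; in fact $Y$ bounds a compact $k$-submanifold $V$ of $S^k$ itself (take one of the two regions it cuts off, using that $S^k\setminus Y$ has finitely many components and $Y$ is the common boundary of a bipartition of them). Once we have such a $V\subseteq S^k$ with $\partial V=Y$, we simply push the interior of $V$ slightly into the open ball: choose the collar $S^k\times[0,1]$ as above and replace $V\times\{0\}$ by the graph of a function $f\colon V\to[0,1]$ with $f\equiv 0$ on a neighbourhood of $\partial V$ and $f>0$ on $\mathrm{int}\,V$, isotoped to be an embedding. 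The resulting $W=\{(x,f(x)):x\in V\}$ is a properly embedded compact $k$-manifold with $\partial W=Y$ and $W\cap S^k=\partial W$, as required. (Alternatively, one can avoid separability entirely: cross with $[0,1]$ to get $Y\times[0,1]$ as above, then observe that $Y\times\{1\}\subseteq S'$ bounds the region it cuts off on $S'$, reducing to the same capping move one dimension lower in ambient sphere but the argument is the same.)

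The main obstacle is the step ``$Y$ separates $S^k$, i.e. $Y$ bounds a codimension-zero submanifold of $S^k$.'' This is exactly where we use that $Y$ is a \emph{submanifold} and not merely a subcomplex: a closed $(k-1)$-submanifold of $S^k$ has a tubular neighbourhood, and $Y$ is $\ZZ_2$-homologous to zero in $S^k$ by Poincar\'e–Alexander duality (since $H_{k-1}(S^k;\ZZ_2)=0$), which by Poincar\'e–Lefschetz duality forces the fundamental class of $Y$ to be a mod-$2$ boundary of a chain carried by $S^k\setminus Y$; feeding this back through the normal bundle shows one can two-colour the components of $S^k\setminus Y$ so that $Y$ is the boundary of the closure of the union of one colour class. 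I expect to spell this out for general $k$ using the long exact sequence of the pair $(S^k, S^k\setminus Y)$ together with Thom isomorphism for the (possibly twisted, but $\ZZ_2$-trivial) normal bundle. A secondary technical point is making the radial push and the graph construction genuinely \emph{embedded} and \emph{proper}; this is a routine collar/tubular-neighbourhood argument and I will only indicate it, referring to standard differential-topology facts.
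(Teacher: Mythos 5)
Your proposal is correct in substance and rests on the same underlying idea as the paper's proof -- namely that a closed hypersurface in $S^k$ ($k\geq 2$) separates, so one can cap $Y$ off by a codimension-zero region of the sphere pushed into the interior of the ball -- but the two arguments handle the disconnectedness of $Y$ differently. The paper never produces a single region bounded by all of $Y$: it applies Jordan--Brouwer separation to each connected component $Y_i$ separately, picks one complementary component $\widehat Y_i$ for each, orders them compatibly with nesting, and places the cap $\widehat Y_i$ at its own radial level $i/n$ in the cone structure on $D^{k+1}$, joined to $Y_i\subseteq S^k$ by a radial cylinder; disjointness of the resulting pieces $W_i$ is guaranteed by the choice of distinct levels and the nesting order. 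You instead invoke $H_{k-1}(S^k;\ZZ_2)=0$ and a two-colouring of the components of $S^k\setminus Y$ to get one compact $V\subseteq S^k$ with $\partial V=Y$, and then push $\mathrm{int}\,V$ into the ball in one step. Your route needs the (standard, but not entirely free) global claim that the components of $S^k\setminus Y$ admit a two-colouring whose interface is exactly $Y$, whereas the paper only needs the connected case of separation and trades the duality argument for the combinatorial bookkeeping of nested caps. Two small points to repair in your write-up: the opening ``radial push'' reduction is a no-op (it reduces the statement to itself, as you note), and the condition ``$f\equiv 0$ on a neighbourhood of $\partial V$ and $f>0$ on $\mathrm{int}\,V$'' is self-contradictory -- moreover, if $f$ vanished on a whole neighbourhood of $\partial V$ then $W\cap S^k$ would strictly contain $\partial W$, violating properness. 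You want $f^{-1}(0)=\partial V$ exactly, with $f$ positive on $\mathrm{int}\,V$ and, say, equal to a collar parameter near $\partial V$ so that the graph is a neatly embedded manifold with boundary $Y$.
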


\begin{proof}
Let $Y=\bigsqcup_{i=1}^n Y_i$, where each $Y_i$ is connected and
$\bigsqcup$ denotes disjoint union.
Let $\widehat{Y}_i$ be one of the two connected components of $S^k\setminus Y_i$.
Note that if $\widehat{Y}_i\cap \widehat{Y}_j\neq \emptyset$, then either $\widehat{Y}_i\subseteq \widehat{Y}_j$ or $\widehat{Y}_i\supseteq \widehat{Y}_j$ holds.
We may assume that if $\widehat{Y}_i\supseteq \widehat{Y}_j$, then $i<j$.
Let $W_i = \widehat{Y}_i\times\{i/n\}\cup Y_i\times[i/n,1]$ in the cone $D^{k+1}=(S^k\times [0,1])/(S^k\times \{0\})$. 
By the construction, $W_i$ are disjoint submanifolds of $D^{k+1}$ and $W=\bigsqcup_{i=1}^n W_i$ is a desired submanifold.
\end{proof}

For each $i\in \{2,3,4\}$, let us construct a closed
$(d-1)$-submanifold $Y_i$ of $X$ from $H_i$. The construction is as
follows.  We first fix a way of resolving self-intersections of
$H_i\cap \CC^{(2)}$ on every $2$-cell by changing the intersecting
curves to disjoint ones, as done in Section~\ref{sec:RP2}. Then, for
each $3$-cell $C^3$, we have a (possibly disconnected) closed
$1$-submanifold $Y^1$ of $\partial C^3\cong S^2$.  By
Lemma~\ref{lem:extension}, $Y^1$ bounds a $2$-manifold $W^2$ in $C^3$.
Then, for each $4$-cell $C^4$, we have a (possibly disconnected)
closed $2$-submanifold $Y^2$ of $\partial C^4\cong S^3$.  Indeed, in
$\partial C^4$, there are exactly two $3$-cells $C^3_1$ and $C^3_2$
containing a $2$-cell $C^2$, and $W^2_1\subset C^3_1$ and
$W^2_2\subset C^3_2$ are glued together along
$W^2_1\cap C^2 = W^2_2\cap C^2$.  We can continue this process and
obtain a closed $(d-1)$-submanifold $Y^{d-1}$ of $X$ and let
$Y_i=Y^{d-1}$.

\begin{lemma}\label{lem:non-trivial}
The homology class $[Y_i]$ is non-trivial in $H_{d-1}(X)\cong \ZZ_2$ for $i\in \{2,3,4\}$.
\end{lemma}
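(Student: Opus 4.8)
The plan is to establish the substantive part of the statement, namely that $[Y_i]\neq 0$ in $H_{d-1}(X;\ZZ_2)$; that this group is isomorphic to $\ZZ_2$ is read off from Poincar\'e duality together with the hypotheses on $X$ (for $\RP d$ and the even lens spaces this is exactly the cohomology ring computation recalled after Theorem~\ref{thm:3col_of_X}). To prove non-vanishing I would exhibit a $1$-dimensional cycle in $X$ that meets $Y_i$ in an odd number of transverse points. Since $\CC$ is non-bipartite, its $1$-skeleton contains an odd cycle $\gamma$; regarding $\gamma$ as an embedded circle in $X$, the goal is to show that the mod-$2$ intersection number $[Y_i]\cdot[\gamma]\in H_0(X;\ZZ_2)=\ZZ_2$ equals $1$.

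First I would record how $Y_i$ sits near the $1$-skeleton. By the construction preceding the lemma, $Y_i$ arises from $H_i$ by resolving the self-intersections on the $2$-cells and then filling in, dimension by dimension, via Lemma~\ref{lem:extension} inside the cells of dimension $3,4,\dots,d$, and each of these filling steps happens in the \emph{interior} of a cell of dimension at least $3$. Hence $Y_i$ avoids every vertex of $\CC$ and crosses an edge $e$ transversally, through its midpoint, precisely when $e$ lies in $E_i$, where $E_i$ denotes (as in Section~\ref{sec:RP2}) the set of edges whose ends are coloured by $1$ and $i$ or by the two complementary colours. Consequently $Y_i\cap\gamma$ consists of exactly $\size{E(\gamma)\cap E_i}$ transverse points, so that $[Y_i]\cdot[\gamma]=\size{E(\gamma)\cap E_i}\bmod 2$.

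Next I would run the edge-cut parity argument from the alternative proof of Theorem~\ref{t-young}. The colouring restricts on $\gamma$ to a graph homomorphism into $K_4$, under which $E_2$, $E_3$ and $E_4$ are the preimages of the three perfect matchings $\{12,34\}$, $\{13,24\}$ and $\{14,23\}$ of $K_4$. These matchings partition $E(K_4)$, and the union of any two of them is an edge-cut of $K_4$; the image of $\gamma$ is a closed walk, so it traverses each such edge-cut an even number of times. Therefore $\size{E(\gamma)\cap E_2}$, $\size{E(\gamma)\cap E_3}$ and $\size{E(\gamma)\cap E_4}$ are pairwise congruent modulo $2$, and since they sum to $\size{E(\gamma)}$, which is odd, each of them is odd. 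In particular $[Y_i]\cdot[\gamma]=1$.

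Finally, the mod-$2$ intersection number of two cycles in a closed manifold depends only on their homology classes (see \cite[Chapter~VI, Section~11]{Bre}), so $[Y_i]\cdot[\gamma]\neq 0$ forces $[Y_i]\neq 0$ (and, as a by-product, $[\gamma]\neq 0$ in $H_1(X;\ZZ_2)$); equivalently, writing $\eta_i\in H^1(X;\ZZ_2)$ for the Poincar\'e dual of $[Y_i]$, the computation gives $\langle\eta_i,[\gamma]\rangle=1$ and hence $\eta_i\neq 0$. I do not expect a deep obstacle: the two working ingredients — the $K_4$ parity argument and the homological invariance of mod-$2$ intersection numbers — are respectively purely combinatorial (and already in Section~\ref{sec:RP2}) and standard. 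The one point genuinely requiring care is the assertion that the inductive construction of $Y_i$ produces no spurious intersection with the $1$-skeleton, so that $Y_i\cap\CC^{(1)}$ is exactly the set of midpoints of the edges in $E_i$; this is precisely where one uses that every filling step is confined to the interior of a cell of dimension $\ge 3$ and therefore stays away from $\CC^{(1)}$.
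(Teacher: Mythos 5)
Your proposal is correct and follows essentially the same route as the paper: intersect $Y_i$ with an odd cycle $\gamma$, identify the intersection points with the midpoints of the edges in $E(\gamma)\cap E_i$, and use the $K_4$ edge-cut parity argument from Section~\ref{sec:RP2} to show this count is odd, whence $[Y_i]\cdot[\gamma]=1$ and $[Y_i]\neq 0$. The paper phrases the parity step as $[\gamma]\cdot([Y_2]+[Y_3]+[Y_4])=1$ together with $[\gamma]\cdot([Y_j]+[Y_k])=0$, which is the same computation you carry out via pairwise congruences; your explicit remark that the fillings in cells of dimension $\geq 3$ create no spurious intersections with $\CC^{(1)}$ is a point the paper leaves implicit.
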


\begin{proof}
Since $\CC^{(1)}$ is non-bipartite, there exists an odd cycle
$\gamma$. 
By the construction of $Y_i$ ($i\in\{2,3,4\}$), the intersection of $\gamma$ and $Y_2\cup Y_3\cup Y_4$ consists of odd number of transverse double points.
Therefore, $[\gamma]\cdot ([Y_2]+[Y_3]+[Y_4])=1$ in $H_0(X)=\ZZ_2$.
On the other hand, by considering a graph homomorphism $\CC^{(1)}\to
K_4$ induced from the $4$-coloring, we have $[\gamma]\cdot
([Y_2]+[Y_3])=0$. It follows that $[\gamma]\cdot [Y_4]=1$. 
A similar
argument applies to $Y_2$ and $Y_3$.
\end{proof}

\begin{proof}[Proof of Theorem~\ref{thm:3col_of_X}]
We prove in fact that for any distinct $i,j\in \{2,3,4\}$, there exists a $2$-cube $C$ such that two curves $H_i\cap C$ and $H_j\cap C$ intersect transversally at a point in $C$.
Suppose, for a contradiction, that there is no such $2$-cube $C$ for some $i,j$.
Then, $(Y_i\cap Y_j)\cap \CC^{(2)}= \emptyset$.
Here we may assume $Y_i$ and $Y_j$ intersect transversally.
By Lemma~\ref{lem:skeleton}, we have $[Y_i\cap Y_j]=0 \in H_{d-2}(\RP{d})$.
On the other hand, Lemma~\ref{lem:non-trivial} and the condition (2) imply $[Y_i]\cdot [Y_j]\neq 0$.
This is a contradiction.

As in the proof of Theorem~\ref{t-young}, the vertices of the cube $C$ have all four colours. 
This proves Theorem~\ref{thm:3col_of_X}.
\end{proof}

\begin{remark}
In the case $X=\RP{d}$ for $d\geq 3$, we further show that there exists a $3$-cube such that all of the four colours appear on its vertices.
Indeed, if there is no such a $3$-cube, then $(Y_2\cap Y_3\cap Y_4)\cap \CC^{(3)}= \emptyset$.
Hence, $[Y_2]\cdot [Y_3]\cdot [Y_4]=0 \in H_{d-3}(\RP{d})$, which contradicts $w^3\neq 0$.
\end{remark}

\section*{Acknowledgments}
The research of the fourth author, Y. Nozaki, was supported by JSPS KAKENHI Grant Numbers JP20K14317, JP23K12974 and JP24H00686.
The research of the fifth author, K. Ozeki, was supported by JSPS KAKENHI Grant Numbers JP22KF0148,	JP22K19773, and JP23K03195.

\end{document}